\numberwithin{equation}{section}
\newtheorem{theorem}{Theorem}[section]
\newtheorem{lemma}{Lemma}[section]
\theoremstyle{definition}
\newtheorem{definition}{Definition}[section]
\newtheorem{corollary}{Corollary}[section]
\theoremstyle{definition}
\newtheorem{remark}{Remark}[section]
\newcommand{\beq}{\begin{equation}}
\newcommand{\eeq}{\end{equation}}
\newcommand{\GL}{\Lambda}
\newcommand{\eqnref}[1]{(\ref {#1})}
\newcommand{\Ge}{\epsilon}
\newcommand{\Gs}{\sigma}
\title{Existence and uniqueness of Generalized Polarization Tensors vanishing structures}
\author{
{Fanbo Sun}
\thanks{School of Mathematics and Statistics, Central South University, Changsha, 410083, Hunan Province, China. \ \ Email: fanbo\_sun2023@163.com}\quad\quad
{Youjun Deng}
\thanks{Corresponding author. School of Mathematics and Statistics, Central South University, Changsha, 410083, Hunan Province, China. \ \ Email: youjundeng@csu.edu.cn; dengyijun\_001@163.com}
}
\date{}
\begin{document}
	\maketitle

	\begin{abstract}
		This paper is concerned with the open problem proposed in Ammari et. al. Commun. Math.
		Phys, 2013. We first investigate the existence and uniqueness of Generalized Polarization Tensors (GPTs) vanishing structures locally in both two and three dimension by fixed point theorem. Employing the Brouwer Degree Theory and the local uniqueness, we prove that for any radius configuration of $N+1$ layers concentric disks (balls) and a fixed core conductivity, there exists at least one piecewise homogeneous conductivity distribution which achieves the $N$-GPTs vanishing. Furthermore, we establish a global uniqueness result for the case of proportional radius settings, and derive an interesting asymptotic configuration for structure with thin coatings. Finally, we present some numerical examples to validate our theoretical conclusions.
		\medskip
		
		\noindent{\bf Keywords:} GPTs vanishing structure; Existence and uniqueness; Brouwer Degree Theory; Layer potential
		
		\noindent{\bf 2020 Mathematics Subject Classification: 35R30, 35R05, 35C10}
		
	\end{abstract}
	\section{Introduction}
	
	\quad\  Consider the conductivity problem
	\begin{equation}\label{conductivity problem}
		\begin{cases}
			\begin{aligned}
				&\nabla \cdot ((\sigma_{\Omega}\chi_{\Omega}+\chi_{\Omega_0})\nabla u)=0 &  \text{in } \mathbb{R}^d,\\
				&(u-H)(x) = O(|x|^{1-d}) &  \text{as } |x|\to \infty.
			\end{aligned}
		\end{cases}
	\end{equation}
	where $d=2,3$ and $\Omega$ is an inclusion inserted into infinite homogeneous background $\Omega_0=\mathbb{R}^d\backslash \Omega$, $\chi$ presents the characteristic function and $H(x)$ is a harmonic function in $\mathbb{R}^d$. The conductivity $\sigma_{\Omega}$ of inclusion is different from the background, which cause some perturbation to the background fields. In the inverse conductivity problem, the measurement of boundary data is utilized to reconstruct unknown shapes and conductivity distribution of inclusions embedded in background. However, there are certain types of inclusions that only perturb the background fields very slightly (even not at all). These inclusions are referred to near (completely) cloaking structures, as they render themselves invisible to probing by electrical impedance tomography (EIT) \cite{you2020combined,ji2021neutral,blaasten2020recovering}.
	
	The cloaking by transmission optics is used to construct a singular conductivity distribution whose DtN map is exactly the same as constant conductivity distribution \cite{greenleaf}. However, this
	transformation induces the singularity of material. To overcome, the near cloaking structures developed by Kohn \textit{et al}. \cite{kohn2008cloaking} are widely applied in practice. They use a regular transformation to push forward the material constant into a small ball (with radius $\rho$) instead of a singular point, and the near cloaking effects can be estimated to be of order $\rho^d$. We also refer to \cite{DLU171,DLU172,LLRU15} for near cloaking anisotropic structures. In addition, neutral inclusions which do not cause any perturbation to uniform background fields have been extensively studied and widely used in designing invisibility cloaking structures with metamaterials in various contexts \cite{kang2019construction,kang2021polarization,kang2022existence,lim2020inclusions,nguyen2016cloaking}.
	
	The generalized polarization tensors (GPTs) vanishing structures were first put forward by Ammari \textit{et al}. \cite{ammari2013enhancement1}, where they designed the structure by multi-coated concentric disks or balls and proved the near cloaking effects can be enhanced to the order of $\rho^{d+2N}$ by $N$-GPTs vanishing structure.
The GPTs, which are extension of the polarization tensor introduced by Schiffer and Szego{\"a} \cite{schiffer1949virtual} to higher order. It's well known that GPTs are defined as an asymptotic sense and they carry geometric information about the inclusion. Indeed, GPTs have been widely used in reconstructing small inclusions
	and shape description \cite{ammari2014reconstruction2,bruhl2003direct,deng2024identifying}. The GPTs vanishing structure is also concerned as neutral inclusions in asymptotic sense. In addition, GPTs-vanishing structures of general shape have also been proposed \cite{feng2017construction,kang2022existence}. However, as an important problem reported initially, the existence of high order GPTs vanishing structures with multi-coated concentric disks, has seen little significant progress recently.
	
	The purpose of this paper is to prove that if the core $\Omega_{N+1}$ has any fixed constant conductivity, then there exist $N$ coatings surrounding $\Omega_{N+1}$ such that the inclusion $\Omega$ becomes an $N$-GPTs vanishing structure. This result is independent of radius settings, that is, for any $N+1$-layer concentric disks (balls) with given radius, there exists a suitable conductivity distribution to achieve the $N$-GPTs vanishing. To address this problem, We first show that the $N$-GPTs vanishing structure exists and is unique under some smallness assumptions on the structure. Then we derive the continuous differentiability of GPTs respect to the conductivity contrast $\eta$. This derivation relies on the integral equation representation with layer potential technique. Afterwards, by delicate and detailed analysis, we find $N$-GPTs vanishing structures employing the homotopic invariance of Brouwer Degree (see Theorem \ref{main result}). We analyze the mapping properties of GPTs respect to $\eta$ and compute the value of Brouwer Degree, which together with the locally uniqueness result, we prove the main results. Finally, we study a type of structures with proportional radius settings and derive a uniqueness result. Besides, we notice that the GPTs vanishing structures designed by extreme thin coats require high contrast conductivity setting, and the contrast between the outermost layer and background should be weaker than others. The numerical experiments are in collaborate with our theoretical findings.
	
	The organization of this paper is as follows. In the following section, we introduce the layer potential technique and the GPTs. Our main results are presented in Section 3. In section 4, we show the existence and uniqueness of $N$-GPTs vanishing structures locally. In Section \ref{sec:3}, we first prove the continuously differentiable of GPTs respect to $\eta$. Then we derive the existence of the $N$-GPTs vanishing structures for any fixed core conductivity using Brouwer Degree Theory. Section \ref{sec:proportional}  focuses on deriving the uniqueness of $N$-GPTs vanishing structures under proportional radius settings, the extreme case is also discussed in this part. In Section \ref{sec:5} we present some numerical experiments to verify our results.

	\section{Layer potentials and GPTs}
  \quad \ \ In this section, we shall introduce the GPTs and related integral system by layer potential technique. Let $\Omega\subset \mathbb{R}^d$ be a piecewise homogeneous domain divided by $N$ parts, i.e. $\Omega_k$, which are surrounded by $C^{1,\alpha}$($0<\alpha<1$) closed and nonintersecting surfaces $\Gamma_k$, $k=1,2,\dots,N$, and $\Omega_0=\mathbb{R}^d\backslash \Omega$ be the background medium. Each region $\Omega_k$ is filled with homogeneous medium, that is
	\begin{equation}
		\sigma(x) = \sigma_k, \quad x\in \Omega_k,\ k=0,1,\dots,N.
	\end{equation}
It is noted that the solution $u$ of \eqref{conductivity problem} satisfies the following transmission conditions:
	\begin{equation}\label{transmission}
		u|_{+}=u|_{-}, \quad \sigma_{k-1}\partial_{\nu_k}|_{+}=\sigma_{k}\partial_{\nu_k}|_{-} \quad \text{on } \Gamma_k,
	\end{equation}
	where the notation $\nu_k$ indicate the outward normal on $\Gamma_k$.
	
	Let $G(x)$ be the fundamental solution to
	the Laplace equation in $\mathbb{R}^d$, that is
	\begin{equation}\label{fundamental}
		G(x)=\begin{cases}
			\frac{1}{2\pi}\ln|x|,  &\text{  } d=2,\\
			\frac{1}{(2-d)\omega_d}|x|^{2-d}, &\text{  } d=3,
		\end{cases}
	\end{equation}
	where $\omega_d$ is the area of the unit sphere in $\mathbb{R}^d$. For a bounded simple closed $C^{1,\alpha}$ surface $\Gamma$, the single and double layer potentials with density $\phi\in L^2(\Gamma)$ are defined by
	
	\begin{equation}\label{single}
		\mathcal{S}_{\Gamma}[\phi]({x}):=\int_{\Gamma}G({x-y})\phi(y)d\sigma_{{y}},\quad x\in \mathbb{R}^d,\notag
	\end{equation}
	\begin{equation}\label{double}
		\mathcal{D}_{\Gamma}[\phi]({x}):=\int_{\Gamma}\frac{\partial}{\partial_{\nu_{y}}}G({x-y})\phi(y)d\sigma_{{y}},\quad x\in \mathbb{R}^d\backslash\Gamma,\notag
	\end{equation}
	where $\partial_{\nu_{y}}$ denotes the outward normal derivative respect to ${y}$-variables. The following jump relations hold
	\begin{equation}\label{sjump}
		\partial_{\nu}\mathcal{S}_{\Gamma}[\phi]({x})|_{\pm}=(\pm\frac{I}{2}+\mathcal{K}^{*}_{\Gamma})[\phi]\quad \text{on } \Gamma,
	\end{equation}
	\begin{equation}
		\mathcal{D}_{\Gamma}[\phi]({x})|_{\pm}=(\mp\frac{I}{2}+\mathcal{K}_{\Gamma})[\phi]\quad \text{on } \Gamma,
	\end{equation}
	where the Neumann-Poincar\'e operator $\mathcal{K}^{*}_{\Gamma}$ is defined by
	
	\begin{equation}\label{NP}
		\mathcal{K}^{*}_{\Gamma}[\phi](x):=\int_{\Gamma}\frac{\partial G({x-y})}{\partial_{\nu_{x}}}\phi(y)d\sigma_{{y}},
	\end{equation}
	and $\mathcal{K}_{\Gamma}$ is the $L^2$ adjoined of $\mathcal{K}^{*}_{\Gamma}$
	\begin{equation}\label{NPad}
		\mathcal{K}_{\Gamma}[\phi](x):=\int_{\Gamma}\frac{\partial G({x-y})}{\partial_{\nu_{y}}}\phi(y)d\sigma_{{y}}.
	\end{equation}
	By using layer potential technique, the solution $u$ to \eqref{conductivity problem} can be represented by
	\begin{equation}\label{solution}
		u(x)=H(x)+\sum_{k=1}^{N}\mathcal{S}_{\Gamma_k}[\phi_k](x),
	\end{equation}
	for some functions $\phi_k\in L_0^2(\Gamma_k)$, where the function space is defined by
	\begin{equation}
		L_0^2(\Gamma_k):=\left\{ \phi\in L^2(\Gamma_k):\int_{\Gamma_k}\phi=0\right\},\notag
	\end{equation}
	using \eqref{sjump}, the transmission conditions \eqref{transmission} can be rewritten by
	\begin{equation}\label{integral}
			\begin{bmatrix}
			\lambda_1I-\mathcal{K}^{*}_{\Gamma_1} & -\mathcal{K}_{\Gamma_2,\Gamma_1} &\cdots & -\mathcal{K}_{\Gamma_N,\Gamma_1}\\
			-\mathcal{K}_{\Gamma_1,\Gamma_2} & \lambda_2I-\mathcal{K}^{*}_{\Gamma_2} &\cdots & -\mathcal{K}_{\Gamma_N,\Gamma_2}\\
			\vdots & \vdots  & \ddots & \vdots\\
			-\mathcal{K}_{\Gamma_1,\Gamma_N} & -\mathcal{K}_{\Gamma_2,\Gamma_N}  &\cdots & \lambda_NI-\mathcal{K}^{*}_{\Gamma_N}
		\end{bmatrix}\begin{bmatrix}
			\phi_1\\\phi_2\\ \vdots \\\phi_N
		\end{bmatrix}=\begin{bmatrix}
			\partial_{\nu_1}H\\\partial_{\nu_2}H\\ \vdots \\\partial_{\nu_N}H
		\end{bmatrix},
	\end{equation}
	where  $\mathcal{K}_{\Gamma_{k_1},\Gamma_{k_2}}:L_0^2(\Gamma_{k_1})\to L_0^2(\Gamma_{k_2})$ denotes the normal derivative of single layer potential $\partial_{\nu_{k_2}}\mathcal{S}_{\Gamma_{k_1}}$ constrained on $\Gamma_{k_2}$ and
	\begin{equation}
		\lambda_k=\frac{\sigma_k+\sigma_{k-1}}{2(\sigma_k-\sigma_{k-1})},\quad k=1,2,\dots,N.
	\end{equation}
	
	Denote the integral operator on the left-hand side of \eqref{integral} by $\mathbb{J}_{\Omega}$. The solvability and uniqueness of above integral system can be established by the following theorem.
	\begin{theorem}\label{invertible}
		For any $\lambda_k\in (-\infty,-1/2]\cup[1/2,+\infty)$, $k=1,2,\dots,N$, the operator $\mathbb{J}_{\Omega}$ is invertible on $L_0^2(\Gamma_1)\times L_0^2(\Gamma_2)\times \dots\times L_0^2(\Gamma_N) \to L_0^2(\Gamma_1)\times L_0^2(\Gamma_2)\times \dots\times L_0^2(\Gamma_N)$.
		\begin{proof}
			For notation simplicity, we take function space $X=L_0^2(\Gamma_1)\times L_0^2(\Gamma_2)\times ...\times L_0^2(\Gamma_N)$, it follows from \eqref{integral} that $\mathbb{J}_{\Omega}$ can be decomposed by
			\begin{equation}
				\begin{aligned}\mathbb{J}_{\Omega}&=\begin{bmatrix}
					\lambda_1I-\mathcal{K}^{*}_{\Gamma_1} & 0 &\cdots & 0\\
					0 & \lambda_2I-\mathcal{K}^{*}_{\Gamma_2} &\cdots & 0\\
					\vdots & \vdots  & \ddots & \vdots\\
					0 & 0  &\cdots & \lambda_NI-\mathcal{K}^{*}_{\Gamma_N}
				\end{bmatrix}+\begin{bmatrix}
				0 & -\mathcal{K}_{\Gamma_2,\Gamma_1} &\cdots & -\mathcal{K}_{\Gamma_N,\Gamma_1}\\
				-\mathcal{K}_{\Gamma_1,\Gamma_2} & 0 &\cdots & -\mathcal{K}_{\Gamma_N,\Gamma_2}\\
				\vdots & \vdots  & \ddots & \vdots\\
				-\mathcal{K}_{\Gamma_1,\Gamma_N} & -\mathcal{K}_{\Gamma_2,\Gamma_N}  &\cdots & 0
			\end{bmatrix}\\
			&=:\mathbb{J}_{\Omega}^{(1)}+\mathbb{J}_{\Omega}^{(2)}.\notag
			\end{aligned}
			\end{equation}
		one can readily see $\mathbb{J}_{\Omega}^{(1)}$ is invertible on $X$ with $|\lambda_k|\geq 1/2$. Since the integral kernel of $\mathcal{K}_{\Gamma_{k_1},\Gamma_{k_2}}$ is analytic, the operator $\mathbb{J}_{\Omega}^{(2)}$ is compact. Thanks for Fredholm alternative, it suffices to prove the injectivity of $\mathbb{J}_{\Omega}$. Indeed, we assume that
		\begin{equation}
			\mathbb{J}_{\Omega}(\phi_1,\phi_2,...,\phi_N)^T=(0,0,...,0)^T,\notag
		\end{equation}
		where the superscript $T$ denotes the matrix transpose. Let
		\begin{equation}
			u(x):=\sum_{k=1}^{N}\mathcal{S}_{\Gamma_k}[\phi_k](x),\quad x\in\mathbb{R}^d.\notag
		\end{equation}
		It's easy to verify that $u$ satisfies the following  transmission problem
		\begin{equation}\label{gtransmission}
			\begin{cases}
					\Delta u=0,& \text{in }\mathbb{R}^d\backslash\cup_{k=1}^N\Gamma_k,\\
					u|_{+}=u|_{-}, & \text{on } \Gamma_k,\\
					(\lambda_k-1/2)\partial_{\nu}u|_{+}=(\lambda_k+1/2)\partial_{\nu}u|_{-}, & \text{on } \Gamma_k,\\
					u(x)=\mathcal{O}(|x|^{1-d}) &|x|\to \infty,
			\end{cases}
		\end{equation}
		we point out \eqref{gtransmission} has only trivial solution. Let $|\lambda_k|\neq 1/2$ for each $k$, the system reduces to a standard transmission problem, thus $u(x)=0$. Next, we suppose $|\lambda_k|=1/2$ for some $k$. If $\lambda_k=-1/2$, it immediately follows $\partial_{\nu}u|_{+}=0$ on $\Gamma_k$, hence $u(x)=0$ in $\mathbb{R}^d\backslash\cup_{i=1}^k\Omega_i$. Using the continuity condition $u|_{+}=u|_{-}$ on $\Gamma_k$ we get $u(x)=0$.
		
		If $\lambda_k=1/2$ for some $k$, that is $\partial_{\nu}u|_{-}=0$ on $\Gamma_k$, thus $u(x)=0,x\in \cup_{i=1}^k\Omega_i$ and consequently $u(x)=0$ in $\mathbb{R}^d$ from \eqref{transmission}. The jump formula \eqref{sjump} yields that
		\begin{equation}
			\phi_l=\frac{\partial\mathcal{S}_{\Gamma_l}}{\partial_{\nu}}|_{+}-\frac{\partial\mathcal{S}_{\Gamma_l}}{\partial_{\nu}}|_{-}=0.\notag
		\end{equation}
		This completes the proof.
		\end{proof}
	\end{theorem}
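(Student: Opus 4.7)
The plan is to exploit the natural splitting $\mathbb{J}_\Omega=\mathbb{J}_\Omega^{(1)}+\mathbb{J}_\Omega^{(2)}$ already suggested by the block structure, and then apply a Fredholm-alternative argument. First I would argue that the block-diagonal piece $\mathbb{J}_\Omega^{(1)}$ is invertible on $X$: on each component space $L_0^2(\Gamma_k)$ the spectrum of the Neumann--Poincar\'e operator $\mathcal{K}^*_{\Gamma_k}$ is contained in $(-1/2,1/2)$ (with the eigenvalue $1/2$ removed by restricting to mean-zero densities), so for any $\lambda_k$ with $|\lambda_k|\ge 1/2$ the operator $\lambda_k I-\mathcal{K}^*_{\Gamma_k}$ is boundedly invertible on $L_0^2(\Gamma_k)$. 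For the off-diagonal piece $\mathbb{J}_\Omega^{(2)}$, since the surfaces $\Gamma_{k_1}$ and $\Gamma_{k_2}$ are disjoint whenever $k_1\ne k_2$, the kernel of $\mathcal{K}_{\Gamma_{k_1},\Gamma_{k_2}}$ is smooth (in fact real-analytic), so each cross operator is Hilbert--Schmidt and $\mathbb{J}_\Omega^{(2)}$ is compact on $X$. Consequently $\mathbb{J}_\Omega$ is a compact perturbation of an invertible operator, hence Fredholm of index zero, and it suffices to prove injectivity.

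For injectivity, suppose $\mathbb{J}_\Omega(\phi_1,\dots,\phi_N)^T=0$ and set $u(x):=\sum_{k=1}^N \mathcal{S}_{\Gamma_k}[\phi_k](x)$. Using the jump relation \eqref{sjump} on each interface together with the vanishing right-hand side, one checks that $u$ is harmonic off $\cup_k\Gamma_k$, continuous across every $\Gamma_k$ (because single layer potentials are), decays at infinity (because each $\phi_k\in L_0^2(\Gamma_k)$), and satisfies the flux jump condition
\begin{equation*}
(\lambda_k-1/2)\,\partial_\nu u|_{+}=(\lambda_k+1/2)\,\partial_\nu u|_{-} \quad \text{on } \Gamma_k,
\end{equation*}
i.e.\ precisely the transmission problem \eqref{gtransmission}. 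The goal then is to show this system admits only $u\equiv 0$.

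When $|\lambda_k|>1/2$ for every $k$, the ratios $(\lambda_k+1/2)/(\lambda_k-1/2)$ are positive finite conductivity contrasts, so \eqref{gtransmission} is the standard homogeneous piecewise-constant conductivity transmission problem in $\mathbb{R}^d$ with decay at infinity, whose only solution is zero by a routine energy identity. The main subtlety, and where I expect the real work, is the boundary case $|\lambda_k|=1/2$ for one or several indices, because then the jump condition degenerates: if $\lambda_k=-1/2$ one has $\partial_\nu u|_+=0$ on $\Gamma_k$, while $\lambda_k=+1/2$ forces $\partial_\nu u|_-=0$. I would handle these by peeling off shells one at a time: in the $\lambda_k=-1/2$ case, the vanishing Neumann trace from the outside plus harmonicity and decay force $u\equiv 0$ on the unbounded exterior of $\Gamma_k$, and then continuity of $u$ across $\Gamma_k$ together with the remaining interior transmission conditions propagates the zero inward; the $\lambda_k=+1/2$ case is symmetric, working outward from the interior. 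Finally, once $u\equiv 0$ in $\mathbb{R}^d$, the single-layer jump relation $\phi_l=\partial_\nu\mathcal{S}_{\Gamma_l}[\phi_l]|_+-\partial_\nu\mathcal{S}_{\Gamma_l}[\phi_l]|_-$ (which coincides with $\partial_\nu u|_+-\partial_\nu u|_-$ on $\Gamma_l$ because the other single layers are smooth there) gives $\phi_l=0$ for every $l$, completing the proof.
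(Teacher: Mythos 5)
Your proposal follows the paper's proof essentially verbatim: the same decomposition into block-diagonal $\mathbb{J}_\Omega^{(1)}$ plus compact off-diagonal $\mathbb{J}_\Omega^{(2)}$, the Fredholm reduction to injectivity, and the same case analysis on the degenerate values $\lambda_k=\pm 1/2$ via the transmission problem for $u=\sum_k\mathcal{S}_{\Gamma_k}[\phi_k]$. The only difference is that you explicitly invoke the spectral inclusion $\sigma(\mathcal{K}^*_{\Gamma_k})\subset(-1/2,1/2)$ on $L_0^2(\Gamma_k)$ to justify the invertibility of $\mathbb{J}_\Omega^{(1)}$, which the paper states without elaboration.
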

	
	With the uniqueness of $(\phi_1,\phi_2,\dots,\phi_N)$ determined by \eqref{integral},  we proceed to introduce the polarization tensors of multi-layer structures. For any multi-index $\alpha=(\alpha_1,\alpha_2,\dots,\alpha_d)$, let $x^{\alpha}=x_1^{\alpha_1}\dots x_d^{\alpha_d}$ and $\partial^{\alpha}=\partial_1^{\alpha_1}\dots\partial_d^{\alpha_d}$, in which $\partial_i=\partial/\partial x_i$. With the help of Taylor expansion for $y$ lies on a compact set we have
	\begin{equation}
		G(x-y)=\sum_{|\alpha|=0}^{+\infty}\frac{(-1)^{|\alpha|}}{\alpha!}\partial^{\alpha}G(x)y^{\alpha},\quad |x|\to\infty,\notag
	\end{equation}
	together with \eqref{solution}, we can obtain the following far-field expansion
	\begin{equation}\label{repre}
		\begin{aligned}
			(u-H)(x)&=\sum_{k=1}^N\int_{\Gamma_k}G(x-y)\phi_k(y)d\sigma_y\\
			&=\sum_{k=1}^N\sum_{|\alpha|=0}^{+\infty}\sum_{|\beta|=0}^{+\infty}\frac{(-1)^{|\alpha|}}{\alpha!\beta!}\partial^{\alpha}G(x)\partial^{\beta}H(0)\int_{\Gamma_k}y^{\alpha}\phi_{k,{\beta}}d\sigma_y,
		\end{aligned}
	\end{equation}
	as $|x|\to \infty$, where $(\phi_{1,\beta},\phi_{2,\beta},...,\phi_{N,\beta})=\mathbb{J}_{\Omega}^{-1}(\partial_{\nu_1}y^{\beta},\partial_{\nu_2}y^{\beta},...,\partial_{\nu_N}y^{\beta})^T$.
	\begin{definition}
		For $\alpha,\beta\in \mathbb{N}^d$, let $\phi_{k,\beta}, k=1,2,\dots,N$ be the solution of
		\begin{equation}
			\mathbb{J}_{\Omega}(\phi_{1,\beta},\dots,\phi_{N,\beta})^T=(\partial_{\nu_1}y^{\beta},\partial_{\nu_2}y^{\beta},\dots,\partial_{\nu_N}y^{\beta})^T. \notag
		\end{equation}
		Then the GPT $M_{\alpha\beta}$ is defined to be
		\begin{equation}\label{GPT}
			M_{\alpha\beta}:=\sum_{k=1}^N\int_{\Gamma_k}y^{\alpha}\phi_{k,{\beta}}d\sigma_y.
		\end{equation}
		Specially, denote $M_{\alpha\beta}$ by $M_{ij}$ with $|\alpha|=|\beta|=1$, the matrix $\mathbf{M}:=(M_{ij})_{i,j=1}^d$ is called P\'olya–Szego{\"a} polarization tensor.
	\end{definition}
	Through \eqref{repre}, we can get complete information about the far-field expansion of perturbed electric potential by GPTs
	\begin{equation}\label{expansion}
		(u-H)(x)=\sum_{|\alpha|=0}^{+\infty}\sum_{|\beta|=0}^{+\infty}\frac{(-1)^{|\alpha|}}{\alpha!\beta!}\partial^{\alpha}G(x)M_{\alpha\beta}\partial^{\beta}H(0),\quad \text{as }|x|\to \infty.
	\end{equation}

\section{Main results}
\quad \ \ In this section, we shall present the main results for existence and uniqueness of GPTs vanishing structures. We shall first introduce the contracted Generalized Polarization Tensors (CGPTs) and the concepts of $N$-GPTs vanishing structure in two and three dimension, respectively. The proof of main results shall be shown in sections afterwards.
	\subsection{Two dimensional case}
	\quad\ For any harmonic function $H(x)$ in $\mathbb{R}^2$, which admits the following expression
	\begin{equation}\label{H}
		H(x)=H(0)+\sum_{n=1}^{+\infty}r^n(a^c_n\cos n\theta+a^s_n\sin n\theta),
	\end{equation}
	where $x=(r\cos\theta,r\sin\theta)$. For multi-indices $\alpha,\beta$ with $|\alpha|=|\beta|=n$, we define $(a_{\alpha}^c)$ and $(a_{\beta}^s)$ with
	\begin{equation}
		\sum_{|\alpha|=n}a_{\alpha}^cx^{\alpha}=r^n\cos n\theta\quad \text{and} \quad \sum_{|\beta|=n}a_{\beta}^sx^{\alpha}=r^n\sin n\theta, \notag
	\end{equation}
	thus we define the contracted Generalized Polarization Tensors (CGPTs) by
	\begin{equation}\label{CGPT1}
		M_{mn}^{cc}:=\sum_{|\alpha|=m}^{+\infty}\sum_{|\beta|=n}^{+\infty}a_{\alpha}^ca_{\beta}^cM_{\alpha\beta},
	\end{equation}
	\begin{equation}\label{CGPT2}
		M_{mn}^{cs}:=\sum_{|\alpha|=m}^{+\infty}\sum_{|\beta|=n}^{+\infty}a_{\alpha}^ca_{\beta}^sM_{\alpha\beta},
	\end{equation}
	\begin{equation}\label{CGPT3}
		M_{mn}^{sc}:=\sum_{|\alpha|=m}^{+\infty}\sum_{|\beta|=n}^{+\infty}a_{\alpha}^sa_{\beta}^cM_{\alpha\beta},
	\end{equation}
	\begin{equation}\label{CGPT4}
		M_{mn}^{ss}:=\sum_{|\alpha|=m}^{+\infty}\sum_{|\beta|=n}^{+\infty}a_{\alpha}^sa_{\beta}^sM_{\alpha\beta},
	\end{equation}
	note the expansion of $G(x-y)$ as $|x|\to \infty$, that is
	\begin{equation}\label{G}
		G(x-y)=\sum_{n=1}^{+\infty}\frac{-1}{2\pi n}\Big\{ \frac{\cos n\theta_x}{r_x^n}r_y^n\cos n\theta_y+\frac{\sin n\theta_x}{r_x^n}r_y^n\sin n\theta_y \Big\}+C,
	\end{equation}
	where $x=(r_x\cos\theta_x,r_x\sin\theta_x)$, $y=(r_y\cos\theta_y,r_y\sin\theta_y)$ and $y\in \Gamma_k$, $k=1,\dots,N$. Plugging \eqref{H} and \eqref{G} into \eqref{expansion}, we get
	\begin{equation}\label{far-field}\begin{aligned}
			(u-H)(x)=&-\sum_{m=1}^{+\infty}\frac{\cos m\theta}{2\pi mr^m}\sum_{n=1}^{\infty}(M_{mn}^{cc}a_n^c+M_{mn}^{cs}a_n^s)\\
			&-\sum_{m=1}^{+\infty}\frac{\sin m\theta}{2\pi mr^m}\sum_{n=1}^{\infty}(M_{mn}^{sc}a_n^c+M_{mn}^{ss}a_n^s),
		\end{aligned}
	\end{equation}
	uniformly as $|x|\to +\infty$.
	The structure $\Omega$ with conductivity distribution $\sigma_{\Omega}$ is called $N$-GPTs vanishing structure which satisfies $M_{mn}^{cc}=M_{mn}^{cs}=M_{mn}^{sc}=M_{mn}^{ss}=0$ for any $m,n\leq N$.
	
	To obtain $N$-GPTs vanishing structures, Ammari \textit{et al}. \cite{ammari2013enhancement1} introduced multiple radial symmetric coatings. We shall prove that for any $N$,  there exists an $N+1$-layer radial symmetric structure can be designed to achieve the $N$-GPTs vanishing. Suppose $0<r_{N+1}<r_{N}<\dots<r_{2}<r_{1}$ and denote the coating domain by
	\begin{equation}
		\Omega_k:={r_{k+1}<r\leq r_{k}}, \quad k=1,2,\dots,N,\notag
	\end{equation}
	the core $\Omega_{N+1}=\{r\leq r_{N+1}\}$ coated by $\Omega_k$ and background media $\Omega_{0}=\{r> r_{1}\}$. Take conductivity of $\Omega_k$ be $\sigma_k$ and $\sigma_{0}=1$. Thanks for the orthogonality of harmonic functions, there holds
	\begin{equation}
		\begin{aligned}
			&M_{nm}^{cs}=M_{nm}^{sc}=0\quad \text{for all } m,n,\\
			&M_{nm}^{cc}=M_{nm}^{ss}=0\quad \text{for } m\neq n,
		\end{aligned}\notag
	\end{equation}
	and
	\begin{equation}
		M_{nn}^{cc}=M_{nn}^{ss}=0\quad\text{for all } n. \notag
	\end{equation}
	Therefore, the electric potential is perturbed mildly with $N$-GPTs vanishing structure, that is
	\begin{equation}
		(u-H)(x)=\mathcal{O}(|x|^{-N-1})\quad \text{as }|x|\to +\infty. \notag
	\end{equation}
 In what follows, define
 $$\eta_k=\frac{\sigma_k-\sigma_{k-1}}{\sigma_k+\sigma_{k-1}}, \quad k=1,2,\dots,N+1.$$
  For the sake of simplicity, we take $M_n[\eta]=M_{nn}^{cc}$ generated by $\eta=(\eta_1,\dots,\eta_{N+1})$ and $\mathcal{M}_N(\eta)=(M_1[\eta],\dots,M_N[\eta])$. We are mainly concerned with solution to the following nonlinear equations
	\begin{equation}
		\mathcal{M}_N(\eta)=0,\quad \eta\in[-1,1]^{N+1},\ \eta\neq 0. \notag
	\end{equation}
It is readily seen that $\eta=0$ is a trivial solution to the above nonlinear equations. The existence and $N$-GPTs vanishing structure can be shown as follows.
\begin{theorem}\label{fixed core}
		For any given radius $0<r_{N+1}<r_{N}<\dots<r_{2}<r_{1}$ and fixed $\sigma_{N+1}\geq 0$, there exists a combination $\eta=(\eta_1,\dots,\eta_{N+1})\in [-1,1]^{N+1}$ such that the $N$-GPTs $\mathcal{M}_N$ vanish.
\end{theorem}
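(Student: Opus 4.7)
The plan is to view $\mathcal{M}_N$ as a $C^1$ map on a bounded open subset of parameter space and apply Brouwer degree theory, anchoring at the trivial background configuration via the local existence result of Section~4. Since $\sigma_0=1$ and $\sigma_{N+1}$ are both fixed, the $N+1$ contrasts $\eta_k$ obey the single constraint $\prod_{k=1}^{N+1}(1+\eta_k)/(1-\eta_k)=\sigma_{N+1}$, so after eliminating, say, $\eta_{N+1}$ I obtain a square $C^1$ map $F\colon D\to\mathbb{R}^N$, $(\eta_1,\dots,\eta_N)\mapsto(M_1[\eta],\dots,M_N[\eta])$, where $D\subset(-1,1)^N$ is the open region keeping $\sigma_1,\dots,\sigma_N\in(0,\infty)$. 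I would then embed $\sigma_{N+1}$ as the endpoint of a homotopy $\sigma_{N+1}(t)=(1-t)+t\sigma_{N+1}$, with corresponding maps $F_t$; at $t=0$ the core matches the background and $F_0(\mathbf{0})=0$, the zero corresponding to all coatings also being equal to the background.

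By the local existence and uniqueness of Section~4 applied to the configuration $\sigma_0=\sigma_{N+1}=1$, the zero of $F_0$ at the origin is isolated, and the continuous differentiability of $\mathcal{M}_N$ with respect to $\eta$ from Section~5 yields a nondegenerate Jacobian $DF_0(\mathbf{0})$; hence the Brouwer degree of $F_0$ on a small neighborhood $U$ of the origin equals $\pm 1$. Choosing $U\subset D$ open and bounded so as to contain the full trajectory of zeros of $\{F_t\}_{t\in[0,1]}$, homotopy invariance gives $\deg(F_1,U,0)=\deg(F_0,U,0)\neq 0$, which produces at least one zero $\eta^*$ of $F=F_1$. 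Together with the eliminated constraint this $\eta^*$ yields the desired tuple in $[-1,1]^{N+1}$.

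The main obstacle is the uniform a priori bound ruling out escape of zeros to $\partial D$ along the homotopy, i.e.\ preventing any $|\eta_k|\to 1$ for some $t\in[0,1]$. The plan for this is an extremal-regime analysis of $\mathcal{M}_N$: if $|\eta_k|\to 1$ then the transmission problem~(\ref{gtransmission}) degenerates to a perfect-conductor or insulator problem on one of the $\Omega_k$, and via the far-field expansion~(\ref{far-field}) and the invertibility of $\mathbb{J}_\Omega$ from Theorem~\ref{invertible}, at least one of the leading CGPTs $M_n[\eta]$ should remain bounded away from zero in the limit, precluding simultaneous vanishing of $M_1,\dots,M_N$. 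The special case $\sigma_{N+1}=0$, which forces $\eta_{N+1}=-1$, can then be recovered by a limiting argument from $\sigma_{N+1}>0$. I expect this boundary exclusion, carried out layer by layer with careful asymptotics of the layer potentials under extreme conductivity contrast, to be the most delicate component of the proof.
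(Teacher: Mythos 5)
Your proposal correctly identifies the overall strategy the paper uses — a Brouwer degree argument with a homotopy in $\sigma_{N+1}$, anchored by the local uniqueness results of Section~4 and the $C^1$-dependence of Theorem~\ref{the1}. However, there are two genuine gaps.

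\textbf{Degree computation on the full box.} You compute $\deg(F_0,U_0,0)=\pm 1$ on a small neighborhood $U_0$ of the origin via a nondegenerate Jacobian. But the homotopy argument requires the degree to be nonzero on a fixed set $U$ with $0\notin F_t(\partial U)$ for \emph{all} $t\in[0,1]$, and you acknowledge $U$ must be large enough to contain the entire trajectory of zeros. The extreme case in Section~\ref{subsec:extreme} shows those zeros can have $\eta_k$ arbitrarily close to $\pm 1$, so $U$ essentially has to be $D=[-1,1]^N$. On such a large $U$, nothing prevents $F_0$ from having additional zeros away from the origin whose signed contributions cancel the $\pm 1$ at the origin, making $\deg(F_0,D,0)=0$. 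Your local computation alone does not close this gap. The paper circumvents it entirely with a Borsuk--Ulam argument: by passing to harmonic conjugates one shows $\mathcal{M}_N(\eta',t)=-\mathcal{M}_N(-\eta',-t)$, so $F_0$ is an \emph{odd} map on $D$; Theorem~\ref{Borsuk-Ulam} then forces $\deg(F_0,D,0)$ to be odd (hence nonzero) without any need to enumerate zeros. This oddness is the linchpin of the paper's proof, and it is missing from your proposal.

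\textbf{Boundary exclusion.} Your plan to rule out zeros on $\partial D$ — showing some CGPT stays bounded away from zero when $|\eta_k|=1$ via an ``extremal-regime analysis'' — is both unsubstantiated and in tension with the paper's later findings: Section~\ref{subsec:extreme} exhibits GPT-vanishing configurations with $\eta_k=(-1)^{N-k}+O(\delta)$, i.e.\ contrasts arbitrarily close to $\pm 1$. The paper's boundary argument is of a different nature and much sharper: when $|\eta_p|=1$ exactly, the interface $\Gamma_p$ becomes a perfect Neumann (or Dirichlet) boundary and the exterior problem — and hence all $M_n$ — becomes \emph{independent of $\eta_{N+1}$}. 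Consequently a boundary zero would solve $\mathcal{M}_N(\eta',t)=0$ simultaneously for every $t$, including $t$ corresponding to $\eta_{N+1}=0$; combined with the local uniqueness of Theorem~\ref{th:mainle2} (small-core case) this yields a contradiction. This is not a quantitative nondegeneracy estimate at extreme conductivities but an algebraic decoupling, and you would need to discover it independently for your boundary exclusion to go through.

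Finally, note that the paper's proof of Theorem~\ref{fixed core} does not establish degree nonvanishing from scratch: it anchors the $\sigma_{N+1}$-homotopy at $\sigma_{N+1}=0$ (so that $\eta_{N+1}\equiv-1$) and reads off $\deg(G_0,D,0)\neq 0$ from \eqref{brouwer degree}, which was already secured via the Borsuk--Ulam argument in Theorem~\ref{main result}. Anchoring at $\sigma_{N+1}=1$ as you propose is not inherently wrong, but it forfeits the odd-symmetry structure and leaves you with the unaddressed degree-cancellation problem described above.
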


	\subsection{Three dimensional case}
	The harmonic function $H(x)$ in $\mathbb{R}^3$ can be denoted by
	\begin{equation}
		H(x)=H(0)+\sum_{n=1}^{+\infty}\sum_{n'=-n}^na_n^{n'}r^nY^{n'}_n(\theta,\phi), \notag
	\end{equation}
	where $Y_n^{n'}(\theta,\phi)$ are the $n'$-th spherical harmonics of order $n$. Similarly the fundamental solution $G(x-y)$ has the following expression as $|x|\to \infty$
	\begin{equation}\label{G2}
		G(x-y)=\sum_{n=1}^{+\infty}\sum_{n'=-n}^n \frac{r_y^n}{(2n+1)r_x^{n+1}}Y_n^{n'}(\theta_x,\phi_x)\overline{Y_n^{n'}(\theta_y,\phi_y)}+C,
	\end{equation}
	with the spherical coordinate $(r_x,\theta_x,\phi_x)$. For multi-index $|\alpha|=n$, we define $(a_{\alpha}^{n,m})$ by
	\begin{equation}
		\sum_{|\alpha|=n}a_\alpha^{n,m}x^\alpha=r^nY_n^m(\theta,\phi), \notag
	\end{equation}
	thus the contracted Generalized Polarization Tensors(CGPTs) in three-dimension are then defined as
	\begin{equation}
		M_{mn}^{m'n'}:=\sum_{|\alpha|=m}^{+\infty}\sum_{|\beta|=n}^{+\infty}a_{\alpha}^{m,m'}a_{\beta}^{n,n'}M_{\alpha\beta}, \quad |m'|\leq m,|n'|\leq n, \notag
	\end{equation}
	which leads to the following far-field expression
	\begin{equation}\label{far-field2}
		(u-H)(x)=\sum_{m=1}^{\infty}\sum_{m'=-m}^{m}\frac{Y_m^{m'}(\theta,\phi)}{(2m+1)r^{m+1}}\sum_{n=1}^{\infty}\sum_{n'=-n}^{n}M_{mn}^{m'n'}a_n^{n'},
	\end{equation}
	uniformly as $|x|\to +\infty$.
	
	We shall keep the geometric and conductivity settings of $\Omega$ in accordance with the two dimensional case. We denote $\tilde{M}_n=M_{nn}^{n'n'}$, since $M_{nm}^{n'm'}=0$ for $n\neq m$ or $n'\neq m'$ and $M_{nn}^{n_1'n_1'}=M_{nn}^{n_2'n_2'}$ for $|n_1'|\leq n,\ |n_2'|\leq n$. Thus the $N$-GPTs vanishing structure $\Omega$ can be designed by taking $\tilde{M}_n=0,\ n=1,\dots,N$. Let $\tilde{\mathcal{M}}_N(\eta)=(\tilde{M}_1[\eta],\dots,\tilde{M}_N[\eta])$ be generated by $\eta=(\eta_{1},\dots,\eta_{N+1})$. We shall now prove the following existence result for three dimensional case.

	\begin{theorem}\label{th:main2}
	For any given radius $0<r_{N+1}<r_{N}<\dots<r_{2}<r_{1}$ and fixed $\sigma_{N+1}\geq 0$, there exists a combination $\eta=(\eta_{1},\dots,\eta_{N+1})\in [-1,1]^{N+1}$ such that the $N$-GPTs $\tilde{\mathcal{M}}_N$ vanish.
	\end{theorem}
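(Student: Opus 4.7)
The plan is to run the same scheme used to establish Theorem \ref{fixed core}, replacing the trigonometric Fourier tools on the circle by the spherical harmonic machinery on the sphere. As noted below \eqref{far-field2}, the radial symmetry of concentric balls together with the orthogonality of $Y_n^{n'}$ block-diagonalises the integral system \eqref{integral} by $(n,n')$, and the diagonal CGPT $M_{nn}^{n'n'}$ is independent of $n'$. Hence $\tilde{\mathcal{M}}_N(\eta)=(\tilde{M}_1[\eta],\dots,\tilde{M}_N[\eta])$ is well defined as a function of the $N+1$ contrasts $\eta_1,\dots,\eta_{N+1}$, and fixing $\sigma_{N+1}$ (given $\sigma_0=1$) imposes one nonlinear coupling among them through $\eta_k=(\sigma_k-\sigma_{k-1})/(\sigma_k+\sigma_{k-1})$, leaving $N$ free scalar parameters for the $N$ vanishing equations.

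First I would establish that $\eta\mapsto \tilde{\mathcal{M}}_N(\eta)$ is continuously differentiable on an appropriate open subset of $(-1,1)^{N+1}$. This follows exactly as in the two-dimensional case proved in Section \ref{sec:3}: one differentiates the defining identity $\mathbb{J}_{\Omega}(\phi_{1,\beta},\dots,\phi_{N,\beta})^T=(\partial_{\nu_1}y^\beta,\dots,\partial_{\nu_N}y^\beta)^T$ with respect to $\eta_k$, uses the smooth dependence of $\lambda_k$ on $\eta_k$ away from $\pm 1$, and invokes the invertibility of $\mathbb{J}_{\Omega}$ from Theorem \ref{invertible} to express $\partial_{\eta_k}\tilde{M}_n$ as an $L^2$ pairing of a perturbed density against a known spherical harmonic mode on $\Gamma_k$.

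Second, I would anchor a Brouwer degree computation by means of the local existence and uniqueness result for small structures proved in Section \ref{sec:3}. Introducing the rescaled family $\tilde{\mathcal{M}}_N^{(\rho)}$ obtained by replacing the radii $r_k$ with $\rho r_k$ for small $\rho>0$, the local theorem produces a nondegenerate isolated zero $\eta^*(\rho)\in (-1,1)^{N+1}$ compatible with the fixed $\sigma_{N+1}$, and a direct asymptotic expansion of $\tilde{M}_n[\eta]$ in powers of the radius ratios shows that the corresponding Jacobian is invertible. This gives a nonzero Brouwer degree of $\tilde{\mathcal{M}}_N^{(\rho)}$ on a suitable bounded open set $U\subset (-1,1)^{N+1}$.

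Finally, I would let $\rho$ deform continuously from the small anchor value to $1$, and appeal to homotopy invariance of the Brouwer degree to conclude that $\tilde{\mathcal{M}}_N=\tilde{\mathcal{M}}_N^{(1)}$ still possesses a zero in $U$, which yields the desired $\eta\in[-1,1]^{N+1}$. The main obstacle I expect is the a priori control required for the homotopy to be admissible, namely ruling out zeros that reach $\partial U$ (in particular with some $|\eta_k|\to 1$, i.e.\ $\sigma_k\to 0$ or $\sigma_k\to+\infty$) as $\rho$ varies. I anticipate that this control relies on the boundary behavior of the Neumann–Poincar\'e operators $\mathcal{K}^{*}_{\Gamma_k}$ as $\lambda_k\to\pm 1/2$, combined with the spherical harmonic block structure and Theorem \ref{invertible}, to exclude degenerate zero configurations of $\tilde{\mathcal{M}}_N^{(\rho)}$ on $\partial U$ uniformly in $\rho$, thereby closing the degree argument.
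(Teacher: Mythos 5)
There is a genuine gap in the choice of homotopy. The scaling homotopy you propose is degenerate: from the explicit formula $\tilde{M}_n=-(2n+1)\,e^T\tilde{\Upsilon}_{N+1}^{(n)}(\tilde{P}_{N+1}^{(n)})^{-1}e$, the matrix $\tilde{P}_{N+1}^{(n)}$ depends only on the ratios $r_j/r_i$ (scale invariant), while $\tilde{\Upsilon}_{N+1}^{(n)}$ is diagonal in $r_k^{2n+1}$, so replacing every $r_k$ by $\rho r_k$ simply multiplies $\tilde{M}_n$ by $\rho^{2n+1}$. Hence $\tilde{\mathcal{M}}_N^{(\rho)}$ has exactly the same zero set as $\tilde{\mathcal{M}}_N$ for every $\rho>0$, the deformation in $\rho$ does no work, and the small-$\rho$ anchor gives you no information you did not already have. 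Moreover, Theorem \ref{th:mainle2} is a statement about $r_{N+1}\to 0$ with $r_1,\dots,r_N$ fixed (which genuinely degrades the ratios), not about a global dilation — so the anchor you want to invoke is not produced by your homotopy family at all.

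What the paper actually does is to keep the radii fixed and deform the \emph{core contrast} $\eta_{N+1}=t$ from $0$ to the prescribed value, i.e. set $\tilde{F}_t(\eta')=\tilde{\mathcal{M}}_N(\eta',t)$ on $D=\{\eta':|\eta_n|\le 1\}$. Two ingredients close the degree argument that you left as "expected obstacles": (i) the admissibility on the boundary is obtained not by estimating the Neumann--Poincar\'e operator near $\lambda_k=\pm 1/2$, but by observing that when some $\eta_p=\pm 1$ the transmission problem reduces on $\Gamma_p$ to a pure Neumann (or Dirichlet) condition, so $\tilde{F}_t$ restricted to $\partial D$ is \emph{independent of $t$}; then one need only rule out $0\in\tilde{F}_0(\partial D)$, which is done by contradiction via Theorem \ref{th:mainle2}; and (ii) the degree of $\tilde{F}_0$ at the isolated zero $\eta'=0$ is computed \emph{directly}: using the exact value of $\mathcal{S}_{\Gamma_k}$ on spherical harmonics one finds $\partial\tilde{M}_n/\partial\eta_k\big|_{\eta=0}=n\,r_k^{2n+1}$, so $\nabla\tilde{F}_0(0)$ is a nonsingular Vandermonde-type matrix and $\mathrm{deg}(\tilde{F}_0,D,0)=1$. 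Note also that the paper explicitly points out that the odd-map/harmonic-conjugate trick used in two dimensions does \emph{not} carry over to three dimensions — this is precisely why the 3D proof must compute the Jacobian rather than invoke Borsuk--Ulam — so "running the same scheme as in 2D" is not quite the right description of the 3D argument either.
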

\section{The existence and uniqueness of $N$-GPTs vanishing structure locally}
\quad \ \ In this section, we shall prove the existence and uniqueness of $N$-GPTs vanishing structure under some assumptions on the solutions or the structure. The related results shall be used to derive our final results in the last section.

Let us first recall the result obtained in \cite{FD23,kong2024inverse} for radially symmetric structure. It can be explicit given that
$M_n=-2\pi n b_0^{(n)}$, where $b_0^{(n)}=e^T \Upsilon_{N+1}^{(n)} (P_{N+1}^{(n)})^{-1} e$, $e=(1,1,\cdots,1)^T$,
	\begin{equation}\label{PN}
		P_{N+1}^{(n)}[\eta]:=\begin{bmatrix}
			1/\eta_1 & (r_2/ r_1)^{2n}  & \cdots & (r_{N+1}/ r_1)^{2n} \\
			-1 & 1/\eta_2  & \cdots &(r_{N+1}/ r_2)^{2n} \\
			\vdots & \vdots  & \ddots & \vdots \\
			 -1 & -1 & \cdots & 1/\eta_{N+1}
		\end{bmatrix},
\end{equation}
 and
\begin{equation}\label{RN}
	\Upsilon_{N+1}^{(n)}:=\begin{bmatrix}
		r_1^{2n} & 0  & \cdots & 0\\
		0 & r_2^{2n}  & \cdots & 0\\
		\vdots & \vdots  & \ddots & \vdots\\
		0 & 0 & \cdots & r_{N+1}^{2n}
	\end{bmatrix}.
	\end{equation}
In three dimensional case, one similarly has $\tilde{M}_n=-(2n+1)e^T \tilde{\Upsilon}_{N+1}^{(n)} (\tilde{P}_{N+1}^{(n)})^{-1} e$, where
	\begin{equation}
		\tilde{P}_{N+1}^{(n)}[\eta]:=\begin{bmatrix}
			\frac{2n+1}{2n}1/\eta_1+\frac{1}{2n} & \frac{n+1}{n}(r_2/ r_1)^{2n+1}  & \cdots & \frac{n+1}{n}(r_{N+1}/ r_1)^{2n+1} \\
			-1 & \frac{2n+1}{2n}1/\eta_2+\frac{1}{2n}  & \cdots &\frac{n+1}{n}(r_{N+1}/ r_2)^{2n+1} \\
			\vdots & \vdots  & \ddots & \vdots \\
			 -1 & -1 & \cdots & \frac{2n+1}{2n}1/\eta_{N+1}+\frac{1}{2n}
		\end{bmatrix},
\end{equation}
and
\begin{equation}
\tilde{\Upsilon}_{N+1}^{(n)}:=\begin{bmatrix}
		r_1^{2n+1} & 0  & \cdots & 0\\
		0 & r_2^{2n+1}  & \cdots & 0\\
		\vdots & \vdots  & \ddots & \vdots\\
		0 & 0 & \cdots & r_N^{2n+1}
	\end{bmatrix}.\notag
	\end{equation}
We want to point out that, in the above setup, we have the assumptions that $\eta_n\neq 0$, $n=1,2,\ldots, N+1$. Actually, if there holds $\eta_{N_0}=0$ for some $N_0\geq 1$ and $\eta_{n}\neq 0$ for $n\neq N_0$, then the $N$ coatings is essentially degenerated to $N-1$ coatings. It is a common sense that $N-1$ coatings can not achieve $N$-GPTs vanishing, at least if the structures (radius) of $N-1$ coatings are fixed. We shall show that this is actually true. But before this, we need to show some locally existence and uniqueness results for $N$-GPTs vanishing with $N$ coatings.
In what follows, we shall define $f_0:=\sum_{n=1}^{N+1} \eta_n$. Numerical observation in \cite{ammari2013enhancement1} show that when $N$-GPTs vanish, $f_0$ is small, which means that the conductivity distributions in the multi-coatings are oscillating. So it is nature to first assume that $f_0$ is small enough.
\subsection{Existence and uniqueness for $f_0$ is small}
\begin{theorem}\label{th:mainsmall1}
Suppose $f_0=\sum_{n=1}^{N+1} \eta_n\ll 1$ is fixed
then there exists a unique solution $\eta$ such that the $N$-GPTs vanish.
\end{theorem}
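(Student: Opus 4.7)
The plan is to set up an augmented map $F:\mathbb{R}^{N+1}\to\mathbb{R}^{N+1}$ combining $\mathcal{M}_N$ with the linear constraint $\sum_i \eta_i = f_0$, expand it around $\eta=0$ using the explicit matrix formulas \eqref{PN}--\eqref{RN}, and apply the implicit function theorem (equivalently, a Banach fixed point argument). The trivial solution $\eta=0$ corresponds to $f_0=0$, and a unique nontrivial branch is obtained by turning on $f_0$.

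First I would carry out a Neumann expansion of $(P_{N+1}^{(n)})^{-1}$ near $\eta=0$. Write $P_{N+1}^{(n)}[\eta] = D_\eta^{-1} + Q^{(n)}$, where $D_\eta = \mathrm{diag}(\eta_1,\ldots,\eta_{N+1})$ and $Q^{(n)}$ collects the off-diagonal entries $(r_j/r_i)^{2n}$ (for $j>i$) and $-1$ (for $j<i$). Since $r_{k+1}<r_k$, the operator norm of $Q^{(n)}$ is bounded uniformly in $n$. Factoring $P_{N+1}^{(n)} = D_\eta^{-1}(I + D_\eta Q^{(n)})$ and inverting yields
\begin{equation*}
(P_{N+1}^{(n)})^{-1} = D_\eta - D_\eta Q^{(n)} D_\eta + O(\|\eta\|^3),
\end{equation*}
uniformly for $\|\eta\|$ sufficiently small. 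Substituting into $M_n = -2\pi n\, e^T \Upsilon_{N+1}^{(n)} (P_{N+1}^{(n)})^{-1} e$ produces
\begin{equation*}
M_n[\eta] = -2\pi n \sum_{i=1}^{N+1} r_i^{2n}\eta_i + O(\|\eta\|^2),\qquad n=1,\ldots,N.
\end{equation*}

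Next, define $F(\eta) := \bigl(\sum_i\eta_i,\ -M_1[\eta]/(2\pi),\ \ldots,\ -M_N[\eta]/(2\pi N)\bigr)^T$ and recast the problem as $F(\eta) = (f_0,0,\ldots,0)^T$. By the previous expansion, the Jacobian $DF(0)$ is the transposed Vandermonde matrix with entries $r_i^{2(j-1)}$ for $j=1,\ldots,N+1$. Since the radii $r_i$ are positive and distinct, the $r_i^2$ are distinct, so $DF(0)$ is invertible. The implicit function theorem then yields a unique smooth branch $\eta=\eta(f_0)$ in a neighbourhood of $0$, with $\eta(f_0)= DF(0)^{-1}(f_0,0,\ldots,0)^T + O(f_0^2)$. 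Equivalently, one may rewrite the problem as the fixed-point equation $\eta = DF(0)^{-1}\bigl((f_0,0,\ldots,0)^T - F(\eta) + DF(0)\eta\bigr)$ and verify it is a contraction on a ball of radius $C|f_0|$ once $|f_0|$ is small enough, giving both existence and local uniqueness.

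The three-dimensional case is handled by the identical scheme applied to $\tilde P_{N+1}^{(n)}$: the diagonal entry $\tfrac{2n+1}{2n}\eta_i^{-1}+\tfrac{1}{2n}$ still dominates as $\eta\to 0$, the Neumann expansion proceeds exactly as above, and the linearization is once again a Vandermonde matrix (now in $r_i^{2n+1}$), hence invertible. The main delicate point is to verify that the remainder $O(\|\eta\|^2)$ is uniform in $n\in\{1,\ldots,N\}$, which follows from boundedness of $\|Q^{(n)}\|$ and $\|\tilde Q^{(n)}\|$ under $r_{k+1}<r_k$, and to choose the contraction radius depending only on the Vandermonde data so that the smallness threshold for $f_0$ can be stated cleanly.
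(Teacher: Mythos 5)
Your proposal is correct and follows essentially the same route as the paper's proof: both split $P_{N+1}^{(n)}$ into the dominant diagonal $\mathrm{diag}(\eta_i^{-1})$ plus a bounded remainder (your $Q^{(n)}$ is the paper's $\Pi_n$), expand the inverse to extract the leading linear term $\sum_i r_i^{2n}\eta_i$ with an $O(|\eta|^2)$ remainder, augment the $N$ vanishing conditions with the constraint $\sum_i\eta_i=f_0$ to obtain a square system whose linearization at $\eta=0$ is the Vandermonde matrix $V$, and then invoke the implicit function theorem / Banach fixed point, which is exactly the fixed-point argument the paper cites. The only cosmetic difference is that you write the Neumann series and the Jacobian more explicitly and briefly sketch the 3D analogue, which the paper leaves to the reader.
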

\begin{proof} We only prove the two dimensional case, while it is similar to show the proof for three dimensional case. Let
 \beq
 \GL:= \begin{bmatrix}
 \eta_1 & 0 & \cdots & 0 \\
 0 & \eta_2 & \cdots & 0 \\
 \vdots & \vdots & \ddots & \vdots \\
 0 & 0 & \cdots & \eta_{N+1}
 \end{bmatrix} \quad\mbox{and}\quad \Pi_n = P_{N+1}^{(n)} - \GL^{-1}.
 \eeq
Then we have
$$
(P_{N+1}^{(n)})^{-1}= (I+\GL\Pi_n)^{-1} \GL.
$$
We now write $b_1^{(n)}$ as
\beq\label{eq:comfin2}
b_1^{(n)}= e^T \Upsilon_{N+1}^{(n)} \GL e + e^T \Upsilon_{N+1}^{(n)}
[(I+\GL\Pi_n)^{-1}-I] \GL e.
\eeq
One can easily see that
\beq
e^T \Upsilon_{N+1}^{(n)} \GL e = \sum_{j=1}^{N+1} \eta_j r_j^{2k},
\eeq
Define
\beq\label{eq:dfnwk}
w_n(\eta):= e^T \Upsilon_{N+1}^{(n)}
[(I+\GL\Pi_n)^{-1}-I] \GL e
\eeq
then there satisfies
\beq\label{wkest}
|w_n(\eta)| \le C |\eta|^2,
\eeq
where $C$ depends on $r_1, \ldots, r_{N+1}$ and $N$.
Thus $\eta=(\eta_1, \ldots, \eta_{N+1})$ satisfies the following equation:
 \beq
 V \eta^T + W(\eta)= \begin{bmatrix} \sum_{j=1}^{N+1} \eta_j \\ b_{1}^{(1)} \\
\vdots \\  b_{1}^{(N)} \end{bmatrix},
 \eeq
where
 $$
 V= \begin{bmatrix}
1 & 1 & \cdots & 1 \\
r_1^2 & r_2^2 & \cdots & r_{N+1}^2 \\
\vdots & \vdots & \ddots & \vdots \\
r_1^{2N} & r_2^{2N} & \cdots & r_{N+1}^{2N}
\end{bmatrix}, \quad
W(\eta) = \begin{bmatrix} 0 \\ w_1(\eta) \\ \vdots \\ w_N(\eta)
\end{bmatrix}.
 $$

Note that $V$ is the Vandermonde matrix and invertible. By \eqnref{wkest} there
are $\Ge>0$ and $C_1, C_2$ depending on $r_1, \ldots, r_{N+1}$ and $N$ such that if
$|\eta| \le \Ge$, then
 \beq
 C_1 |\eta| \le |V \eta + W(\eta)| \le C_2 |\eta|.
 \eeq
By fixed point theory (see, for example, \cite{Ze86}), there is also $\delta_0>0$ such that for all $\mathbf{f}\in \mathbb{R}^{N+1}$ satisfying $|\mathbf{f}| \le \delta_0$
there is a unique solution $\eta$ satisfying $|\eta| \le \Ge$ to
\beq\label{VWf}
V \eta + W(\eta) =\mathbf{f}.
\eeq
In particular, if we take $\mathbf{f}=(f_0, 0, \ldots, 0)^T$ with $|f_0| \le \delta_0$,
then there is a unique solution to \eqnref{VWf}.
\end{proof}

\begin{corollary}
If $\Gs_{N+1}$ is fixed and is close enough to the background conductivity $\Gs_0$ then $N$-GPTs vanishing structure exists and is unique.
\end{corollary}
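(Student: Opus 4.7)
The plan is to reduce the corollary to Theorem~\ref{th:mainsmall1} by showing that, locally around $\eta=0$, fixing the core conductivity $\Gs_{N+1}$ close to the background $\Gs_0$ is equivalent to fixing the scalar $f_0=\sum_{k=1}^{N+1}\eta_k$ close to $0$. Once this reparametrization is established, existence and uniqueness are inherited directly from the theorem.

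First I would express $\Gs_{N+1}$ as an explicit function of $\eta$. Inverting $\eta_k=(\Gs_k-\Gs_{k-1})/(\Gs_k+\Gs_{k-1})$ layer by layer gives the multiplicative representation
\[
\Gs_{N+1}=\Gs_0\prod_{k=1}^{N+1}\frac{1+\eta_k}{1-\eta_k},
\]
and expanding this product near $\eta=0$ yields
\[
\Gs_{N+1}-\Gs_0=2\Gs_0\sum_{k=1}^{N+1}\eta_k+\mathcal{O}(|\eta|^2)=2\Gs_0\,f_0(\eta)+\mathcal{O}(|\eta|^2).
\]
Thus $\eta\mapsto\Gs_{N+1}(\eta)$ is real-analytic on $[-1,1]^{N+1}$ and its gradient at the origin is $2\Gs_0(1,1,\ldots,1)$, i.e.\ a scalar multiple of the top row of the Vandermonde matrix $V$ that already appeared in the proof of Theorem~\ref{th:mainsmall1}.

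Next I would invoke Theorem~\ref{th:mainsmall1} to obtain, for every $f_0$ with $|f_0|\le\delta_0$, a unique $\eta=\eta(f_0)$ with $|\eta|\le\Ge$ solving $V\eta^T+W(\eta)=(f_0,0,\ldots,0)^T$; the two-sided bound $C_1|\eta|\le |V\eta+W(\eta)|\le C_2|\eta|$ forces $|\eta(f_0)|\le C|f_0|$. Composing with the expansion of $\Gs_{N+1}(\eta)$ above then gives
\[
\Gs_{N+1}\bigl(\eta(f_0)\bigr)=\Gs_0+2\Gs_0\,f_0+\mathcal{O}(f_0^2),
\]
so this scalar map is $C^1$ near $f_0=0$ with nonvanishing derivative $2\Gs_0$. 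The classical inverse function theorem then produces a neighborhood of $\Gs_0$ on which every target core conductivity is attained by exactly one value of $f_0$, and pulling back through $\eta(f_0)$ yields the unique $N$-GPTs vanishing structure with the prescribed $\Gs_{N+1}$.

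The only step that requires care is arguing that the selection map $f_0\mapsto\eta(f_0)$ supplied by Theorem~\ref{th:mainsmall1} is genuinely $C^1$ (rather than merely Lipschitz), since this smoothness is what lets me differentiate the composition and apply the inverse function theorem. This follows either from the real-analyticity of $W$ in \eqnref{eq:dfnwk} together with a Banach-space version of the implicit function theorem applied to $V\eta+W(\eta)=\mathbf{f}$, or, alternatively and perhaps more cleanly, by avoiding the two-step reparametrization altogether and applying the implicit function theorem directly to the full $(N+1)$-dimensional map $\eta\mapsto(\Gs_{N+1}(\eta),\mathcal{M}_N(\eta))$ at $\eta=0$, whose Jacobian coincides, up to rescaling the first row by $2\Gs_0$, with the invertible Vandermonde matrix already used in Theorem~\ref{th:mainsmall1}.
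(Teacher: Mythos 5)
Your proof is correct but follows a two-step route that is a bit more elaborate than the paper's one-step argument. The paper stays entirely inside the fixed-point framework of Theorem~\ref{th:mainsmall1}: it simply swaps the constraint from ``prescribe $f_0=\sum_n\eta_n$'' to ``prescribe $\Gs_{N+1}$'' by noting $\Gs_{N+1}=1+2\sum_n\eta_n+w_0(\eta)$ with $|w_0(\eta)|\le C|\eta|^2$, and then replacing the first (zero) entry of $W(\eta)$ in \eqnref{VWf} with $w_0(\eta)$ while putting $(1-\Gs_{N+1})/2$ in the first slot of $\mathbf{f}$. The modified $W$ still satisfies the quadratic bound, the linearization is still the invertible Vandermonde matrix $V$, so the very same contraction estimate gives existence and uniqueness directly, with no need to differentiate any solution map. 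You instead extract the solution map $f_0\mapsto\eta(f_0)$ from Theorem~\ref{th:mainsmall1} and then invert the scalar composition $f_0\mapsto\Gs_{N+1}(\eta(f_0))$ via the inverse function theorem, which does require $\eta(\cdot)$ to be $C^1$. You flag this correctly, and both remedies you offer are sound: $W$ is analytic so the implicit function theorem upgrades the fixed-point solution to a smooth map, and your second alternative --- applying the implicit function theorem directly to $\eta\mapsto(\Gs_{N+1}(\eta),\mathcal{M}_N(\eta))$ at $\eta=0$, whose Jacobian is a nonsingular diagonal rescaling of $V$ --- is essentially the same computation the paper performs via the contraction principle, just phrased as IFT rather than Banach fixed point. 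The paper's in-place modification of $W$ is slightly cleaner because it sidesteps the regularity-of-the-solution-map question altogether, but the underlying linear algebra (invertibility of the Vandermonde matrix after absorbing $\Gs_{N+1}$ into the first row) and the conclusion are the same.
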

\begin{proof} For the sake of simplicity, assume that the background conductivity $\Gs_{0}=1$. Then there holds
\beq\label{eq:Gsb_rela1}
\Gs_{N+1}=\prod_{n=1}^{N+1}\frac{1+\eta_n}{1-\eta_n}
\eeq
Since
$\|\eta\|_{l^{\infty}}<1$ by Taylor expansion we have
\beq
\Gs_{N+1}=\prod_{n=1}^{N+1}\left[(1+\eta_n)\sum_{k=0}^{\infty}\eta_n^k\right]=1+2\sum_{n=1}
^{N+1}\eta_n + w_0(\eta)
\eeq
where the residual term $w_0(\eta)$ satisfies
\beq
|w_0(\eta)|\leq C |\eta|^2
\eeq
for $|\eta|\leq\epsilon$. Thus if we take $\mathbf{f}=((1-\Gs_{N+1})/2, 0, \ldots, 0)^T$
and substitute the first term in $W(\eta)$, appeared in Theorem \ref{th:mainsmall1}, with $w_0(\eta)$, then
by following the same way in Theorem \ref{th:mainsmall1} there is a unique solution to \eqnref{VWf} for $|(1-\Gs_{N+1})/2|\leq\delta_0$
for some small $\delta_0>0$.
\end{proof}
\subsection{Existence and uniqueness for $r_{N+1}$ is small}
Next, we shall show that the existence and uniqueness can also be proven in the case that the radius of the core, i.e. $r_{N+1}$ is small enough. The result shall be used for
existence of $N$-GPTs in general situation.

\begin{theorem}\label{th:mainle2}
Suppose that $r_{N+1}\ll 1$, then for any fixed core conductivity $\Gs_{N+1}$, the $N$-GPTs vanishing structure exists and is unique.
\end{theorem}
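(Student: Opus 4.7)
The plan is to exploit the smallness of $r_{N+1}$ in the spirit of Theorem \ref{th:mainsmall1}, by decomposing $P_{N+1}^{(n)}$ and inverting it perturbatively. Writing $\rho := r_{N+1}$, the first $N$ entries of the last column of $P_{N+1}^{(n)}$ are $(\rho/r_i)^{2n}=O(\rho^{2n})$, so I would split $P_{N+1}^{(n)} = A^{(n)} + \rho^{2n} B^{(n)}$, with $A^{(n)}$ obtained from $P_{N+1}^{(n)}$ by zeroing out those small entries. The resulting $A^{(n)}$ is block lower triangular: its top-left $N \times N$ block is the matrix $P_N^{(n)}$ associated with the $N$-layer substructure $\tilde\eta := (\eta_1, \ldots, \eta_N)$, its bottom-right entry is $1/\eta_{N+1}$, and its bottom-left row is $(-1,\ldots,-1)$. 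Using the block formula for $A^{(n),-1}$ together with the Neumann expansion $(P_{N+1}^{(n)})^{-1} = (I + \rho^{2n} A^{(n),-1} B^{(n)})^{-1} A^{(n),-1}$, I would obtain
\begin{equation}\label{eq:expa_mainle2}
b_0^{(n)} = e_N^T \Upsilon_N^{(n)} (P_N^{(n)})^{-1} e_N + \rho^{2n}\Phi_n(\eta),
\end{equation}
where the leading term depends only on $\tilde\eta$ and $\Phi_n$ is bounded uniformly provided $|\eta_{N+1}|$ stays away from $1$.

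The constraint that $\Gs_{N+1}$ is fixed determines $\eta_{N+1}$ as a smooth function of $\tilde\eta$ through
\begin{equation}
\eta_{N+1}(\tilde\eta) = \frac{\Gs_{N+1}-\Gs_N(\tilde\eta)}{\Gs_{N+1}+\Gs_N(\tilde\eta)}, \qquad \Gs_N(\tilde\eta) = \prod_{j=1}^N \frac{1+\eta_j}{1-\eta_j}.
\end{equation}
Substituting this into \eqnref{eq:expa_mainle2} reduces $\mathcal{M}_N(\eta) = 0$ to the square system
\begin{equation}
F_n(\tilde\eta, \rho) := -2\pi n\, b_0^{(n)} = 0, \qquad n = 1,\ldots, N,
\end{equation}
of $N$ equations in the $N$ unknowns $\tilde\eta$. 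At $(\tilde\eta, \rho) = (0, 0)$ the configuration is uniform, so $F_n(0, 0) = 0$, and arguing as in Theorem \ref{th:mainsmall1} one finds that the Jacobian satisfies
\begin{equation}
\frac{\partial F_n}{\partial \eta_j}\bigg|_{(0,0)} = -2\pi n\, r_j^{2n}, \qquad j = 1,\ldots, N,
\end{equation}
which (after row scaling) is a Vandermonde matrix in $r_1^2, \ldots, r_N^2$ and is therefore invertible.

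The implicit function theorem then supplies, for each fixed $\Gs_{N+1}$, a threshold $\Ge > 0$ such that whenever $r_{N+1} \le \Ge$ the system $F_n(\tilde\eta, r_{N+1}) = 0$ admits a unique solution $\tilde\eta$ in a neighborhood of the origin; coupling it with $\eta_{N+1}(\tilde\eta)$ produces the claimed $N$-GPT vanishing structure. The three-dimensional case is handled identically using $\tilde P_{N+1}^{(n)}$ and $\tilde\Upsilon_{N+1}^{(n)}$ in place of $P_{N+1}^{(n)}$ and $\Upsilon_{N+1}^{(n)}$. The main obstacle will be obtaining uniform control on the residual $\Phi_n$ in \eqnref{eq:expa_mainle2} when $\Gs_{N+1}$ is far from the background value: this requires keeping $|\eta_{N+1}|$ bounded away from $1$ (equivalently $\Gs_{N+1}$ in a compact range bounded away from $0$ and $\infty$), so that the Neumann series for $(P_{N+1}^{(n)})^{-1}$ converges with norm bounds independent of $\tilde\eta$ in a neighborhood of the origin and can be absorbed into the smallness requirement on $\rho$.
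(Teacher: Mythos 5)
Your proof is correct, and it takes a genuinely different route from the paper. The paper reuses the linearization $V\eta^T+W(\eta)=\mathbf f$ built in Theorem \ref{th:mainsmall1}: it fixes $\eta_{N+1}$ directly, moves the $\eta_{N+1}$-term to the right-hand side to get $\tilde V{\eta'}^T+\tilde W(\eta)=\tilde{\mathbf f}$ with $\tilde{\mathbf f}_n=-\eta_{N+1}r_{N+1}^{2n}$, observes by an entry-wise estimate that $w_n(\eta)=O(r_{N+1}^{2n})+O(|\eta'|^2)$, moves the $O(r_{N+1}^{2n})$ portion of $w_n$ to the right as well, and concludes by the same fixed-point argument. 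You instead split the matrix $P_{N+1}^{(n)}=A^{(n)}+\rho^{2n}B^{(n)}$, use the block-lower-triangular form of $A^{(n)}$ to peel off a leading term depending only on $\tilde\eta$ plus a $\rho^{2n}$ remainder, substitute $\eta_{N+1}$ as the smooth function $\eta_{N+1}(\tilde\eta)$ dictated by the fixed $\Gs_{N+1}$, and finish by the implicit function theorem at $(\tilde\eta,\rho)=(0,0)$, where the Jacobian is the Vandermonde matrix $(-2\pi n\,r_j^{2n})$. The two arguments produce the same local existence-and-uniqueness conclusion; yours is more explicit about the role of $\rho$ in the matrix structure, and it also handles the constraint more carefully --- the paper's remark that fixing $\Gs_{N+1}$ ``is equivalent'' to fixing $\eta_{N+1}$ is only true to leading order as $\eta'\to 0$ (since $\eta_{N+1}=(\Gs_{N+1}-\Gs_N(\tilde\eta))/(\Gs_{N+1}+\Gs_N(\tilde\eta))$ genuinely depends on $\tilde\eta$), whereas you compose with $\eta_{N+1}(\tilde\eta)$ explicitly, noting that at $\rho=0$ this extra dependence contributes nothing to the Jacobian. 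One small over-caution in your write-up: the uniform bound on $\Phi_n$ does not actually require $|\eta_{N+1}|$ to stay away from $1$. The matrix $A^{(n),-1}$ has the block form with $(P_N^{(n)})^{-1}=(I+\Lambda_N\Pi_n)^{-1}\Lambda_N$ in the top-left and $\eta_{N+1}$ (not $1/\eta_{N+1}$) appearing in the bottom row, so $A^{(n),-1}$ is bounded uniformly for $\eta_{N+1}\in[-1,1]$ and $\tilde\eta$ near $0$; hence $\rho^{2n}A^{(n),-1}B^{(n)}$ is small for small $\rho$ regardless of $\eta_{N+1}$, and the endpoint cases $\Gs_{N+1}=0$ and $\Gs_{N+1}=\infty$ (which the theorem explicitly allows) are covered without extra hypotheses.
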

\begin{proof} Note that from \eqnref{eq:Gsb_rela1}, it is equivalent to fix $\Gs_{N+1}$ and $\eta_{N+1}$. Let
\beq
\tilde{V}=\left[
\begin{array}{cccc}
r_1^2 & r_2^2 & \cdots & r_{N}^2 \\
r_1^4 & r_2^4 & \cdots & r_{N}^4 \\
\vdots & \vdots & \ddots & \vdots \\
r_1^{2N} & r_2^{2N} & \cdots & r_{N}^{2N}
\end{array}
\right],\quad
\tilde{W}(\eta) = \begin{bmatrix} w_1(\eta) \\ w_2(\eta) \\ \vdots \\
w_N(\eta) \end{bmatrix}
\eeq
and $\eta'=(\eta_1,\ldots,\eta_{N})$. Here $w_n(\eta)$ is defined in \eqnref{eq:dfnwk}. Let $b_{0}^{(n)}=0$ then by
minor adjustment on \eqnref{VWf} we have
\beq\label{VWf1}
\tilde{V} {\eta'}^T + \tilde{W}(\eta) =\tilde{\mathbf{f}}.
\eeq
with $\tilde{\mathbf{f}}=-\eta_{N+1}(r_{N+1}^2,r_{N+1}^4,\ldots,r_{N+1}^{2N})^T$. Denote by $Q_{ij}^{(n)}$ the $(i+1)(j+1)$-th element in
$(I+\GL\Pi_n)^{-1}-I$. Since
\begin{eqnarray*}
w_n(\eta) & = & \sum_{i=1}^{N+1}\sum_{j=1}^{N}
r_i^{2n}Q_{ij}^{(n)}\eta_j+\sum_{i=1}^{N+1}r_i^{2n}Q_{i(N+1)}^{(n)}\eta_{N+1} \\
 & = &  \sum_{j=1}^{N}r_{N+1}^{2n}Q_{(N+1)j}^{(n)}\eta_j +
r_{N+1}^{2n}Q_{(N+1)(N+1)}^{(n)}\eta_{N+1}+O(r_{N+1}^{2n})+O(|{\eta'}|^2) \\
 & = & O(r_{N+1}^{2n}) +
O(|\eta'|^2)
\end{eqnarray*}
 By moving the $O(r_{N+1}^{2n})$ part in $w_n$ to the right side of \eqnref{VWf1} one can get that the right hand
side is of order $r_{N+1}^{2n}$ for the $n$-th equation, $n=1,2,\ldots, N$. Since
$\tilde{V}$ is invertible and does not depend on $r_{N+1}$, for any $\Ge>0$, there exists $\delta_1>0$
such that for $r_{N+1}$ satisfying $r_{N+1} \le \delta_1$ there is a unique solution
$\eta'$ satisfying $|\eta'| \le \Ge$ to \eqnref{VWf1}.
\end{proof}

\section{Proof of main theorems}
	\label{sec:3}
	\quad \ \ Let us first rewrite the integral system \eqref{integral} as follows
	\begin{equation}\label{integral2}
		\begin{bmatrix}
			I-2\eta_1\mathcal{K}^{*}_{\Gamma_1} & -2\eta_1\mathcal{K}_{\Gamma_2,\Gamma_1} &\cdots & -2\eta_1\mathcal{K}_{\Gamma_N,\Gamma_1}\\
			-2\eta_2\mathcal{K}_{\Gamma_1,\Gamma_2} & I-2\eta_2\mathcal{K}^{*}_{\Gamma_2} &\cdots & -2\eta_2\mathcal{K}_{\Gamma_N,\Gamma_2}\\
			\vdots & \vdots  & \ddots & \vdots\\
			-2\eta_N\mathcal{K}_{\Gamma_1,\Gamma_N} & -2\eta_N\mathcal{K}_{\Gamma_2,\Gamma_N}  &\cdots & I-2\eta_N\mathcal{K}^{*}_{\Gamma_N}
		\end{bmatrix}\begin{bmatrix}
			\phi_1\\\phi_2\\ \vdots \\\phi_N
		\end{bmatrix}=\begin{bmatrix}
			2\eta_1\partial_{\nu_1}H\\2\eta_2\partial_{\nu_2}H\\ \vdots \\2\eta_N\partial_{\nu_N}H
		\end{bmatrix}.
	\end{equation}
	Denote the coefficient matrix by $\mathbb{E}^{\eta}_{\Omega}$, we shall demonstrate that the CGPTs are continuously differentiable respect to $\eta=(\eta_1,\eta_2,\dots,\eta_N)$.
	\begin{theorem}\label{the1}
		For $\alpha,\beta\in\mathbb{N}^d$ and $\eta_k\in[-1,1],k=1,\dots,N$. Let $\phi^{\eta}_{k,\beta}$ be the solution of
		\begin{equation}
			\mathbb{E}^{\eta}_{\Omega}(\phi_{1,\beta},\dots,\phi_{N,\beta})^T=(2\eta_1\partial_{\nu_1}y^{\beta},2\eta_2\partial_{\nu_2}y^{\beta},\dots,2\eta_N\partial_{\nu_N}y^{\beta})^T.\notag
		\end{equation}
		Then the GPTs $M_{\alpha\beta}$ defined by \eqref{GPT} are continuously differentiable in $\eta_k\in[-1,1]$ and so are the CGPTs.
		\begin{proof}
			We first claim the integral operator $\mathbb{E}^{\eta}_{\Omega}$ is invertible for all $\eta\in[-1,1]^N$. If $\eta_k\neq 0$ for any $k\leq N$, that case can be immediately proved by the invertibility of $\mathbb{J}_{\Omega}$. Let's assume $\eta_{k_i}=0$ for $i=1,2,\dots,s$, the $k_i$-th equation turns to
			\begin{equation}
				I[\phi_{1,\beta}]=0,\quad i=1,2,\dots,s, \notag
			\end{equation}
			thus $\phi_{1,\beta}=0$, this implies the system has degraded to $(N-s)$-layer transmission problem, whose invertibility can be obtained by Theorem \ref{invertible}.
			
			Let $\delta\ll 1$, $\tilde{\eta}=U(\eta,\delta)\cap [-1,1]^N$ and $\Delta{\eta}=\tilde{\eta}-\eta$. Direct computation shows
			\begin{equation}
				\begin{aligned}
					(\mathbb{E}_{\Omega}^{\eta}+\tilde{\mathbb{E}}_{\Omega})(\tilde{\phi}_{1,\beta},\dots,\tilde{\phi}_{N,\beta})^T&=(2\eta_1\partial_{\nu_1}y^{\beta},\dots,2\eta_N\partial_{\nu_N}y^{\beta})^T\\&+(2\Delta\eta_1\partial_{\nu_1}y^{\beta},\dots,2\Delta\eta_N\partial_{\nu_N}y^{\beta})^T,
				\end{aligned} \notag
			\end{equation}
			where
			\begin{equation}
				\tilde{\mathbb{E}}_{\Omega}=\begin{bmatrix}
					-2\Delta{\eta_1}\mathcal{K}^{*}_{\Gamma_1} & -2\Delta{\eta_1}\mathcal{K}_{\Gamma_2,\Gamma_1} &\cdots & -2\Delta{\eta_1}\mathcal{K}_{\Gamma_N,\Gamma_1}\\
					-2\Delta{\eta_2}\mathcal{K}_{\Gamma_1,\Gamma_2} & -2\Delta{\eta_2}\mathcal{K}^{*}_{\Gamma_2} &\cdots & -2\Delta{\eta_2}\mathcal{K}_{\Gamma_N,\Gamma_2}\\
					\vdots & \vdots  & \ddots & \vdots\\
					-2\Delta{\eta_N}\mathcal{K}_{\Gamma_1,\Gamma_N} & -2\Delta{\eta_N}\mathcal{K}_{\Gamma_2,\Gamma_N}  &\cdots & -2\Delta{\eta_N}\mathcal{K}^{*}_{\Gamma_N}
				\end{bmatrix}. \notag
			\end{equation}
			It follows $(\tilde{\phi}_{1,\beta},\dots,\tilde{\phi}_{N,\beta})=(\phi_{1,\beta},\dots,\phi_{N,\beta})+(\phi^{(1)}_{1,\beta},\dots,\phi^{(1)}_{N,\beta})+\mathcal{O}(\delta^2)$, where
			\begin{equation}\label{MD}
				\begin{aligned}
					(\phi^{(1)}_{1,\beta},\dots,\phi^{(1)}_{N,\beta})^T&=-(\mathbb{E}_{\Omega}^{\eta})^{-1}\tilde{\mathbb{E}}_{\Omega}(\mathbb{E}_{\Omega}^{\eta})^{-1}(2\eta_1\partial_{\nu_1}y^{\beta},\dots,2\eta_N\partial_{\nu_N}y^{\beta})^T\\&+(\mathbb{E}_{\Omega}^{\eta})^{-1}(2\Delta\eta_1\partial_{\nu_1}y^{\beta},\dots,2\Delta\eta_N\partial_{\nu_N}y^{\beta})^T.
				\end{aligned}
			\end{equation}
			Since $(\mathbb{E}_{\Omega}^{\eta})^{-1}$ is independent of $\Delta{\eta}$, the right-hand side of \eqref{MD} is linear with $\Delta{\eta}$.
			Plugging \eqref{MD} into \eqref{GPT}, we obtain
			\begin{equation}
				\begin{aligned}
					\tilde{M}_{\alpha\beta}&=\sum_{k=1}^N\int_{\Gamma_k}y^{\alpha}\tilde{\phi}_{k,{\beta}}d\sigma_y\\&=\sum_{k=1}^N\int_{\Gamma_k}y^{\alpha}{\phi}_{k,{\beta}}d\sigma_y+\sum_{k=1}^N\int_{\Gamma_k}y^{\alpha}{\phi}^{(1)}_{k,{\beta}}d\sigma_y+\mathcal{O}(\delta^2)\\
					&:=M_{\alpha\beta}+\mathbf{M}^{(1)}_{\alpha\beta}(\Delta\eta_1,\dots,\Delta\eta_N)^T+\mathcal{O}(\delta^2),
				\end{aligned}\notag
			\end{equation}
			where	$\mathbf{M}^{(1)}_{\alpha\beta}=(m^1_{\alpha\beta},\dots,m^N_{\alpha\beta})$
			is determined by \eqref{MD}. The continuous differentiability of CGPTs follows from \eqref{CGPT1}-\eqref{CGPT4}.
		\end{proof}
	\end{theorem}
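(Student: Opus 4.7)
The plan is to establish continuous differentiability in two stages: first show that $\mathbb{E}^{\eta}_{\Omega}$ is invertible for every $\eta\in[-1,1]^N$ with a locally uniform bound on the inverse, and then run a Neumann-series perturbation argument to differentiate $(\mathbb{E}^{\eta}_{\Omega})^{-1}$ and feed the result through the definition \eqref{GPT} of $M_{\alpha\beta}$.

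For the invertibility step, I would split into cases. When every $\eta_k\neq 0$, dividing the $k$-th row of $\mathbb{E}^{\eta}_{\Omega}$ by $2\eta_k$ recovers the system \eqref{integral}, whose coefficient operator $\mathbb{J}_{\Omega}$ is invertible by Theorem \ref{invertible}, since the constraint $|\eta_k|\le 1$ is exactly the condition $|\lambda_k|\ge 1/2$. When $\eta_{k_i}=0$ for a nonempty subset of indices $\{k_1,\dots,k_s\}$, the corresponding rows of $\mathbb{E}^{\eta}_{\Omega}$ act as the identity on $\phi_{k_i,\beta}$ with zero right-hand side, so $\phi_{k_i,\beta}=0$ automatically; the reduced system on the remaining $N-s$ densities is again an instance of the setup in Theorem \ref{invertible} and is therefore invertible.

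For differentiability, write $\tilde\eta=\eta+\Delta\eta$ and decompose $\mathbb{E}^{\tilde\eta}_{\Omega}=\mathbb{E}^{\eta}_{\Omega}+\tilde{\mathbb{E}}_{\Omega}$, where $\tilde{\mathbb{E}}_{\Omega}$ is explicitly linear in $\Delta\eta$ (obtained by replacing each occurrence of $\eta_k$ on the left-hand side by $\Delta\eta_k$). The Neumann expansion then yields
\[
(\mathbb{E}^{\tilde\eta}_{\Omega})^{-1}=(\mathbb{E}^{\eta}_{\Omega})^{-1}-(\mathbb{E}^{\eta}_{\Omega})^{-1}\tilde{\mathbb{E}}_{\Omega}(\mathbb{E}^{\eta}_{\Omega})^{-1}+O(|\Delta\eta|^2),
\]
and, combined with the $\Delta\eta$-linear change in the right-hand side of the integral equation, the solution admits an expansion $\tilde\phi_{k,\beta}=\phi_{k,\beta}+\phi^{(1)}_{k,\beta}+O(|\Delta\eta|^2)$ with $\phi^{(1)}_{k,\beta}$ linear in $\Delta\eta$. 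Substituting into \eqref{GPT} produces $\tilde M_{\alpha\beta}=M_{\alpha\beta}+\mathbf{M}^{(1)}_{\alpha\beta}(\Delta\eta_1,\dots,\Delta\eta_N)^{T}+O(|\Delta\eta|^2)$, where $\mathbf{M}^{(1)}_{\alpha\beta}$ depends continuously on $\eta$ because $(\mathbb{E}^{\eta}_{\Omega})^{-1}$ does. Continuous differentiability of the CGPTs then follows immediately from \eqref{CGPT1}--\eqref{CGPT4}, since each CGPT is a finite linear combination of the $M_{\alpha\beta}$ with constant coefficients $a_\alpha^{c}, a_\alpha^{s}$.

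The step I expect to require the most care is the boundary case $\eta_k=0$. There, the operator $\mathbb{E}^{\eta}_{\Omega}$ does not factor cleanly through $\mathbb{J}_{\Omega}$, and both the invertibility and the validity of the Neumann expansion must be handled directly via the degeneration argument above. Obtaining a norm bound on $(\mathbb{E}^{\eta}_{\Omega})^{-1}$ that stays uniform as some $\eta_k\to 0$ is what lets the derivative extend continuously to the full closed cube $[-1,1]^N$, rather than just its interior, and this is the only place where the naive reduction to Theorem \ref{invertible} does not apply mechanically.
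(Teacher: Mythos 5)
Your proposal follows essentially the same route as the paper: invertibility of $\mathbb{E}^{\eta}_{\Omega}$ is established by reducing to $\mathbb{J}_{\Omega}$ when all $\eta_k\neq 0$ and by the degeneration-to-fewer-layers argument when some $\eta_{k_i}=0$, and differentiability is obtained via the same first-order (Neumann-type) expansion of $(\mathbb{E}^{\tilde\eta}_{\Omega})^{-1}$ plugged into \eqref{GPT} and then \eqref{CGPT1}--\eqref{CGPT4}. Your closing remark about needing a locally uniform bound on $(\mathbb{E}^{\eta}_{\Omega})^{-1}$ near $\eta_k=0$ is a fair observation of a point the paper leaves implicit; it is supplied by the standard fact that $\eta\mapsto\mathbb{E}^{\eta}_{\Omega}$ is (affine hence) norm-continuous and the inverse of a continuous, everywhere-invertible family on a compact set is itself continuous and uniformly bounded.
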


	Before proving our main result on the existence of $N$-GPTs vanishing structure, we introduce the Brouwer degree theory, which mainly follows from the monograph \cite{ciarlet2013linear}.
	\begin{definition}\label{deg}
		Let $D\subset \mathbb{R}^n$ be a bounded open set, $\phi \in C^1(\overline{D},\mathbb{R}^n),\ P$ is a regular value of $\phi$ and $P\notin \phi(\partial D)$, define the Brouwer degree in $D$ respect to $P$ as
		\begin{equation}\label{Brouwer degree}
			\text{deg}(\phi,D,P):=\sum_{x\in\phi^{(-1)}(P)}\text{sgn}J_{\phi}(x),
		\end{equation}
		where $\phi^{(-1)}(P)=\{x\in D:\phi(x)=P\}$ and $J_{\phi}(x)=\text{det}\nabla\phi(x)$.
	\end{definition}
	Note that the Brouwer degree can be extended to any $P\notin\phi(\partial D)$ and continuous functions through an integral formulation. However, for simplicity and clarity of computation, we use the definition in Definition \ref{deg}. Moreover, one of the most important properties of Brouwer degree is the homotopic invariance:
	\begin{theorem}\label{th:thmin}{\normalfont[Theorem 9.15-6 in \cite{ciarlet2013linear}]}
		Let $D\subset \mathbb{R}^n$ be a bounded open set, $f$, $g\in C(\overline{D},\mathbb{R}^n)$ and a homotopy $H\in C(\overline{D}\times [0,1],\mathbb{R}^n)$ joining $f$ to $g$ in the space $C(\overline{D},\mathbb{R}^n)$, that is
		\begin{equation}
			H(\cdot,0)=f\quad \text{and} \quad H(\cdot,1)=g. \notag
		\end{equation}
		If the point $b\notin H(\partial D\times [0,1])$,
		then
		\begin{equation}
			\mathrm{deg}(H(\cdot,\lambda),D,b)=\mathrm{deg}(f,D,b)\quad \text{for all }0\leq \lambda\leq1. \notag
		\end{equation}
		In particular, $\mathrm{deg}(f,D,b)=\mathrm{deg}(g,D,b)$.
	\end{theorem}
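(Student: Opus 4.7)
The plan is to prove the homotopy invariance in two stages: first reduce to the smooth case by mollification, then employ the analytic (integral) representation of the degree together with a divergence identity to show that the degree is locally constant in $\lambda$.

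First, I would approximate. Because $H(\partial D\times[0,1])$ is compact and $b$ lies outside it, one has $\delta := \tfrac12 \mathrm{dist}\bigl(b, H(\partial D\times[0,1])\bigr)>0$. By Weierstrass or mollification I would replace $H$ by a $C^{\infty}$ homotopy $\tilde H$ on $\overline D\times[0,1]$ with $\|\tilde H - H\|_{\infty}<\delta$, so that in particular $b\notin \tilde H(\partial D\times[0,1])$ and, for each $\lambda$, $b\notin \tilde H(\cdot,\lambda)(\partial D)$. A stability lemma (shown through the integral representation introduced next) then gives $\mathrm{deg}(H(\cdot,\lambda),D,b)=\mathrm{deg}(\tilde H(\cdot,\lambda),D,b)$ for every $\lambda$, reducing the whole claim to smooth $H$.

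Second, for a smooth $f:\overline D\to\mathbb{R}^n$ with $b\notin f(\partial D)$ I would introduce the analytic representation
\begin{equation}
\mathrm{deg}(f,D,b) = \int_D \omega_{\epsilon}\bigl(f(x)-b\bigr)\, J_f(x)\, dx,\notag
\end{equation}
where $\omega_{\epsilon}\in C^{\infty}_c(\mathbb{R}^n)$ is non-negative, supported in $\{|y|<\epsilon\}$, with $\int \omega_{\epsilon}=1$, and $\epsilon\in\bigl(0,\mathrm{dist}(b,f(\partial D))\bigr)$. That this coincides with Definition \ref{deg} is proved by picking a nearby regular value via Sard's theorem, locally inverting $f$ around each preimage, changing variables, and reading off the signed count; a second check shows the right-hand side is independent of the particular choice of $\omega_{\epsilon}$ and $\epsilon$ within the admissible range, which in turn yields the stability under uniform perturbations needed in the first step.

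Third, with $H$ now smooth, set $F(\lambda):=\int_D \omega_{\epsilon}(H(x,\lambda)-b)\,\det D_x H(x,\lambda)\,dx$ with $\epsilon<\delta$. Differentiating under the integral sign and combining the two contributions, one coming from differentiating $\omega_{\epsilon}$ via $\nabla\omega_{\epsilon}(H-b)\cdot\partial_{\lambda}H$, and the other from the Jacobi formula $\partial_{\lambda}\det D_x H = \mathrm{tr}\bigl(\mathrm{adj}(D_x H)\,D_x(\partial_{\lambda}H)\bigr)$, I would rewrite the integrand as a pure $x$-divergence
\begin{equation}
\partial_{\lambda}\bigl[\omega_{\epsilon}(H-b)\det D_x H\bigr] = \sum_{i=1}^{n} \partial_{x_i}\!\Bigl(\omega_{\epsilon}(H-b)\sum_{j=1}^{n}(\partial_{\lambda}H^j)\,\mathrm{cof}(D_x H)_{ij}\Bigr).\notag
\end{equation}
The algebraic fact that makes this collapse is the Piola identity $\sum_i \partial_{x_i}\mathrm{cof}(D_x H)_{ij}=0$ for every column $j$. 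By the divergence theorem $F'(\lambda)$ becomes a boundary integral of a field whose integrand vanishes wherever $|H-b|\ge \epsilon$; on $\partial D$ we have $|H-b|\ge 2\delta>\epsilon$, so $\omega_{\epsilon}(H-b)\equiv 0$ there and $F'(\lambda)\equiv 0$, giving $\mathrm{deg}(\tilde H(\cdot,\lambda),D,b)$ constant in $\lambda$.

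The main obstacle is exactly this rewriting of the $\lambda$-derivative as an $x$-divergence: one has to combine the $\nabla\omega_{\epsilon}$ term with the Jacobi-formula term, fold both into a cofactor sum, and invoke the Piola identity in just the right way. Everything else, Weierstrass approximation, Sard's theorem for the sum-of-signs/analytic equivalence, and the divergence theorem for the boundary cancellation, is standard once the core differential identity is in place.
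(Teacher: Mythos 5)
The paper offers no proof of this statement at all: it is quoted as Theorem 9.15-6 of Ciarlet's monograph \cite{ciarlet2013linear} and used as a black box, so there is nothing in the paper to compare your argument against line by line. Your proposal is the standard analytic (Heinz/Nagumo-type) proof of homotopy invariance — mollify the homotopy, represent the degree of a smooth map by $\int_D \omega_{\epsilon}(f(x)-b)J_f(x)\,dx$, check consistency with the regular-value definition via Sard and the change of variables, and then show $\partial_{\lambda}\bigl[\omega_{\epsilon}(H-b)\det D_xH\bigr]$ is an $x$-divergence by combining the chain-rule term with the Jacobi formula and the Piola identity $\sum_i\partial_{x_i}\mathrm{cof}(D_xH)_{ij}=0$ — and it is essentially correct, matching the way degree-theory texts (including the extension to continuous maps alluded to after the paper's Definition \ref{deg}) establish the result. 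Two small repairs: on $\partial D$ the relevant bound is $|\tilde H(x,\lambda)-b|\ge\delta>\epsilon$ for the \emph{smooth approximant} (you wrote $|H-b|\ge 2\delta$ with the original $H$), which still suffices; and since $D$ is only a bounded open set with no boundary regularity, you cannot literally invoke the divergence theorem on $\partial D$ — instead note that, by compactness and uniform continuity, $\omega_{\epsilon}(\tilde H(\cdot,\lambda)-b)$ vanishes on a full neighborhood of $\partial D$ uniformly in $\lambda$, so the vector field is compactly supported in $D$ and $\int_D \mathrm{div}_x(\cdot)\,dx=0$ trivially. Finally, make explicit that the stability step used to replace $H$ by $\tilde H$ is itself a consequence of the smooth homotopy invariance applied to the linear interpolation between two nearby smooth approximations, which is exactly how the degree of a merely continuous map is defined; with that ordering spelled out the argument is complete.
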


	Specially, the following theorem investigates that the Brouwer Degree of a continuous odd map is nonzero.
	\begin{theorem}\label{Borsuk-Ulam}{\normalfont[Theorem 9.17-2 in \cite{ciarlet2013linear}]}
		 Let $D\subset \mathbb{R}^n$ be a bounded  origin-symmetric open set such that $0\in D$, $\phi\in C(\overline{D}\times [0,1],\mathbb{R}^n)$ and $\phi(x)=-\phi(-x)$ on $\partial D$. Then,  the Brouwer Degree $\mathrm{deg}(f,D,0)$ is odd.
	\end{theorem}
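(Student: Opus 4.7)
Reading the hypothesis as $\phi\in C(\overline{D},\mathbb{R}^n)$ with $\phi(x)=-\phi(-x)$ on $\partial D$ and $0\notin\phi(\partial D)$ (otherwise the degree is undefined, so this is a standing assumption), my plan is a three-step reduction: (i) replace $\phi$ by a map that is odd on all of $\overline{D}$ without changing the degree; (ii) approximate this odd map by a smooth odd map for which $0$ is a regular value; (iii) use the antipodal symmetry of the preimage set to show the signed count in Definition \ref{deg} is odd.

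For step (i), I would set
\[
\hat\phi(x):=\tfrac12\bigl(\phi(x)-\phi(-x)\bigr),
\]
which is continuous on $\overline{D}$, odd everywhere, and coincides with $\phi$ on $\partial D$ thanks to the boundary oddness. The straight-line homotopy $H(x,t):=(1-t)\phi(x)+t\hat\phi(x)$ reduces to $\phi(x)$ on $\partial D$ for every $t\in[0,1]$, so $0\notin H(\partial D\times[0,1])$, and Theorem \ref{th:thmin} yields $\mathrm{deg}(\phi,D,0)=\mathrm{deg}(\hat\phi,D,0)$. We may therefore assume from now on that $\phi$ is odd on $\overline{D}$; in particular $\phi(0)=0$, so $0$ is automatically a preimage.

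For step (ii), convolving with an even mollifier $\rho_\epsilon$ preserves oddness (the convolution of an odd function with an even function is odd), so $\phi\ast\rho_\epsilon$ is smooth and odd, and for small $\epsilon$ it is uniformly close enough to $\phi$ on $\overline{D}$ to be joined to $\phi$ by a degree-preserving linear homotopy. To then arrange that $0$ is a regular value while remaining odd, I would apply Sard's theorem within a family of odd perturbations, e.g.\ $x\mapsto\phi(x)+\epsilon A x$ for a generic symmetric linear $A$, with $\epsilon$ small enough that the perturbation stays bounded away from $0$ on $\partial D$.

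For step (iii), differentiating the identity $\phi(-x)=-\phi(x)$ gives $\nabla\phi(-x)=\nabla\phi(x)$, hence $J_\phi(-x)=J_\phi(x)$. The preimage set $\phi^{-1}(0)\cap D$ is finite by regularity and symmetric under $x\mapsto -x$, and it contains $0$ with $J_\phi(0)\neq 0$; the remaining preimages split into antipodal pairs whose two sign contributions coincide, so
\[
\mathrm{deg}(\phi,D,0)=\mathrm{sgn}\,J_\phi(0)+2\sum_{\text{pairs}}\mathrm{sgn}\,J_\phi(x_i)\equiv 1\pmod{2}.
\]
I expect step (ii) to be the main obstacle: it is delicate to realize $0$ as a regular value without leaving the class of odd maps or altering the degree. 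The cleanest route is a parametric Sard argument restricted to odd perturbations, or an induction on dimension in the spirit of the classical Borsuk proof; once this is in hand, steps (i) and (iii) are essentially formal.
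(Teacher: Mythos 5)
The paper does not actually prove this statement: it is quoted as Theorem 9.17-2 of \cite{ciarlet2013linear} and used as a black box in the proof of Theorem \ref{main result}, so there is no internal argument to compare against; textbook treatments such as the cited one establish Borsuk's theorem by reducing to smooth odd maps and an inductive coordinate construction rather than by your parametric Sard route. Your outline is nevertheless a correct proof strategy, including your reading of the (typo-marred) hypotheses: one must assume $0\notin\phi(\partial D)$, the map should be $\phi\in C(\overline D,\mathbb R^n)$, and $f=\phi$ in the conclusion. Step (i) is fine, and step (iii) is the right counting argument: for an odd $C^1$ map one has $\nabla\phi(-x)=\nabla\phi(x)$, the nonzero preimages of $0$ pair off antipodally with equal Jacobian signs, and $x=0$ is automatically a preimage because oddness forces $\phi(0)=0$, so the degree equals $\mathrm{sgn}\,J_\phi(0)$ plus an even integer. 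The step you flag as the obstacle, (ii), does close exactly as you propose, and it can be made concrete: consider the family $F(x,A)=\phi(x)+Ax$ over all matrices $A$ (symmetry is unnecessary, since every linear map is odd); for $x\neq 0$ the partial map $A\mapsto Ax$ is onto $\mathbb R^n$, so parametric transversality gives that for a.e.\ small $A$ the value $0$ is regular for $\phi+A\,\cdot$ on $D\setminus\{0\}$, while at the forced zero $x=0$ one only needs $\det\bigl(\nabla\phi(0)+A\bigr)\neq 0$, which again holds for a.e.\ $A$; taking $\|A\|$ smaller than $\min_{\partial D}|\phi|$ keeps the map nonvanishing on $\partial D$, so the degree is unchanged. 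Two small points worth recording: to mollify while staying odd, first extend $\hat\phi$ continuously to $\mathbb R^n$ and replace the extension by its odd part before convolving with the even mollifier; and the zero set is a compact subset of $D$ (since $\phi\neq 0$ on $\partial D$), hence finite once $0$ is a regular value, which justifies the finite sum in your final congruence.
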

	The Brouwer Degree Theory is usually used to prove solvability, in fact, the solvability theorem holds.
	\begin{theorem}\label{solvability}{\normalfont[Theorem 9.15-4 in \cite{ciarlet2013linear}]}
		Let $D\subset \mathbb{R}^n$ be a bounded open set, $\phi\in C(\overline{D},\mathbb{R}^n)$ and $b\in \phi(\partial D)$. If $b \notin \phi(\overline{D})$, then
		\begin{equation}
			\mathrm{deg}(f,D,b)=0. \notag
		\end{equation}
		Hence,
		\begin{equation}
			\mathrm{deg}(f,D,b) \notin 0 \text{ implies that } f(x)=b \text{ for some } x\in D. \notag
		\end{equation}
	\end{theorem}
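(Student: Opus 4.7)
The plan is to treat the two assertions together: the second is the contrapositive of the first, so the substance lies in showing that $b \notin \phi(\overline{D})$ (along with the standing hypothesis $b \notin \phi(\partial D)$ built into the very definition of the degree) forces $\mathrm{deg}(\phi, D, b) = 0$. The core observation is already visible in Definition \ref{deg}: if $\phi \in C^1(\overline{D}, \mathbb{R}^n)$ and $b$ is a regular value with $b \notin \phi(\overline{D})$, then the preimage $\phi^{-1}(b) \cap D = \emptyset$, so the defining sum $\sum_{x \in \phi^{-1}(b)} \mathrm{sgn}\, J_\phi(x)$ is an empty sum and therefore equals $0$. That is the conceptual heart of the statement.

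To promote this to an arbitrary continuous $\phi$, I would use the standard smoothing procedure alluded to in the paper immediately after Definition \ref{deg}. The stability of the hypothesis under perturbation is the main analytic input: since $\phi(\overline{D})$ is compact, $\delta := \mathrm{dist}(b, \phi(\overline{D})) > 0$. I would then select $C^1$ approximants $\phi_k \to \phi$ uniformly on $\overline{D}$ for which $b$ is a regular value of each $\phi_k$ (smooth mollification combined with a small translation, justified by Sard's theorem). For $k$ large enough, $\|\phi - \phi_k\|_\infty < \delta$, hence $b \notin \phi_k(\overline{D})$; the smooth case already treated gives $\mathrm{deg}(\phi_k, D, b) = 0$, and the continuous extension assigns $\mathrm{deg}(\phi, D, b)$ as the eventual common value $0$. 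Taking the contrapositive then yields the solvability statement: if $\mathrm{deg}(\phi, D, b) \neq 0$ then $b \in \phi(\overline{D}) = \phi(D) \cup \phi(\partial D)$, and excluding $\phi(\partial D)$ forces $b \in \phi(D)$, i.e. $\phi(x) = b$ for some $x \in D$.

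The only real obstacle to this otherwise elementary argument is the well-definedness of the continuous extension itself, namely the fact that $\lim_k \mathrm{deg}(\phi_k, D, b)$ is independent of the approximating sequence. That independence rests on the homotopy invariance of the smooth degree applied to the linear homotopy $H(x,t) = (1-t)\phi_k(x) + t\psi_k(x)$ between two competing smooth approximants: when $\phi_k,\psi_k$ both lie within $\delta/2$ of $\phi$ in the uniform norm, $H(\cdot,t)$ stays within $\delta$ of $\phi$ on $\overline{D}$ for every $t \in [0,1]$, in particular avoiding $b$ on $\partial D \times [0,1]$ because $\mathrm{dist}(b, \phi(\partial D)) \geq \delta$. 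The smooth analogue of Theorem \ref{th:thmin} then equates the two degrees, and once this well-definedness is established the present theorem reduces to the one-line empty-sum observation above.
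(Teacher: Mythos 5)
Your argument is correct, but note that the paper does not actually prove this statement at all: it is quoted verbatim (typos included) from Ciarlet's monograph as Theorem 9.15-4, and is used as a black box, so there is no internal proof to compare against. What you supply is the standard construction-level justification of that cited result, and it is sound: once one grants the extension of the degree to continuous maps and non-regular values (which the paper also merely alludes to after Definition \ref{deg}), the theorem is exactly your one-line observation that $b\notin\phi(\overline{D})$ makes the regular-value sum in \eqref{Brouwer degree} empty for any admissible smooth approximant, since $\delta=\mathrm{dist}(b,\phi(\overline{D}))>0$ by compactness and a $C^1$ approximant within $\delta$ of $\phi$ also omits $b$ from its image on $\overline{D}$. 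Your third paragraph (independence of the approximating sequence via the linear homotopy and smooth homotopy invariance) is really part of establishing that the continuous-map degree is well defined, i.e.\ part of the definition the theorem presupposes, rather than part of this theorem's proof; including it does no harm, it just re-derives machinery the statement already assumes. Two small reading corrections you implicitly (and correctly) made: the hypothesis ``$b\in\phi(\partial D)$'' in the paper's statement should read $b\notin\phi(\partial D)$, and ``$\mathrm{deg}(f,D,b)\notin 0$'' should read $\mathrm{deg}(\phi,D,b)\neq 0$; with those intended hypotheses your contrapositive step, $b\in\phi(\overline{D})\setminus\phi(\partial D)\subset\phi(D)$, is exactly right.
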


We establish the existence of $N$-GPTs vanishing structures by employing the Brouwer Degree theory:
	\begin{theorem}\label{main result}
		For any given radius $0<r_{N+1}<r_{N}<\dots<r_{2}<r_{1}$ and fixed $\eta_{N+1}\in [-1,1]$, there exists a nontrivial combination $\eta'=(\eta_1,\dots,\eta_N)$ with $\eta_n\in [-1,1],\ n=1,\dots,N$ such that $N$-GPTs vanish, i.e. $\mathcal{M}_N(\eta',\eta_{N+1})=0$.
		\begin{proof}
			Let $D=\{\eta' : |\eta_n|\leq 1, n=1,\dots,N\}$, and define the map $F_t(\eta')=\mathcal{M}_N(\eta',t),\ | t|\leq1$. It follows from Theorem \ref{the1} that $F_t$ is a homotopy joining $F_{-1}$ to $F_1$.
			
			Firstly, we shall point out that if $\eta'\in \partial D$, then there holds
			\begin{equation}\label{independent}
				\mathcal{M}_N(\eta',t_1)=\mathcal{M}_N(\eta',t_2) \quad\text{for any }t_1\neq t_2.
			\end{equation}
			In fact,  if $\eta'\in \partial D$ there exists some $1\leq p\leq N$ such that $\eta_p=\pm 1$. It suffices to consider $\eta_p=-1$, as the other case is similar.
			
			Consider the following transmission system:
			\begin{equation}\label{ntransmission}
				\begin{cases}
					\Delta u=0,& \text{in }\mathbb{R}^2\backslash\cup_{k=1}^{N+1}\Gamma_k,\\
					u|_{+}=u|_{-}, & \text{on } \Gamma_k, \  k>p,\\
					(1-\eta_k)\partial_{\nu}u|_{+}=(1+\eta_k)\partial_{\nu}u|_{-}, & \text{on } \Gamma_k,\ k> p,\\
					\partial_{\nu}u|_{+}=0, & \text{on } \Gamma_p,\\
					u-r^n\cos n\theta=\mathcal{O}(|x|^{-1}), &|x|\to \infty,
				\end{cases}
			\end{equation}
			The orthogonality of harmonic functions shows
			\begin{equation}
				u-r^n\cos n\theta=\frac{b_n\cos n\theta}{r^n} \quad \text{in } \mathbb{R}^2\backslash \Omega,
			\end{equation}
			thus
			\begin{equation}
				M_n=-2\pi nb_n,
			\end{equation}
			follows from \eqref{far-field}. Note that \eqref{ntransmission} can be treated as an exterior Neumann boundary problem, therefore $M_n$ is independent of $\eta_{N+1}$, which implies \eqref{independent}.
			
Next we suppose that $0\notin F_0(\partial D)$, which will be verified. In this case, by \eqref{independent} one has $0\notin F_t(\partial D)$.
			Now let $u_k^{\perp}$ be the harmonic conjugate of $u$ in each $\Omega_k$ and define $v=\sigma_k u_k^{\perp}+c_k$ in $\Omega_k$, where
			\begin{equation}
				\sigma_k=\prod_{i=1}^{k}\frac{1+\eta_i}{1-\eta_i}\quad k=0,1,\dots,N+1, \notag
			\end{equation}
			and the constants $c_k$ are chosen such that $v$ satisfies the transmission conditions across $\partial\Omega_k$, $k=0,1, \ldots, N+1$. Then $v$ satisfies the following equation
			\begin{equation}\label{ntransmission2}
				\begin{cases}
					\Delta v=0,& \text{in }\mathbb{R}^2\backslash\cup_{k=1}^{N+1}\Gamma_k,\\
					v|_{+}=v|_{-}, & \text{on } \Gamma_k,\\
					(1+\eta_k)\partial_{\nu}v|_{+}=(1-\eta_k)\partial_{\nu}v|_{-}, & \text{on } \Gamma_k,\\
					v+r^n\sin n\theta=\mathcal{O}(|x|^{-1}), &|x|\to \infty,
				\end{cases}
			\end{equation}
			therefore we can immediately deduce that $\mathcal{M}(\eta',t)=-\mathcal{M}(-\eta',-t)$ and hence $F_0(\eta')=-F_0(-\eta')$. It follows from Theorem \ref{Borsuk-Ulam} that
			\begin{equation}
				\text{deg}(F_0,D,0)\neq 0. \notag
			\end{equation}

			By the homotopy invariance of Brouwer degree (Theorem \ref{th:thmin}), we deduce that
			\begin{equation}\label{brouwer degree}
				\text{deg}(F_t,D,0)=\text{deg}(F_0,D,0)\neq 0,
			\end{equation}
			for which we can immediately get the existence of $N$-GPTs vanishing structure by Theorem \ref{solvability}.

Finally we show that $0\notin F_0(\partial D)$, otherwise by \eqref{independent}, there exist $\eta'\in \partial D$ such that
			\begin{equation}
			F_t(\eta')=F_0(\eta')=0 \quad \text{for all }|t|\leq 1. \notag
			\end{equation}
Since in this case $\eta_{N+1}=0$, the structure is equivalent to $N$-coatings over $\Omega_{N+1}$ with radius $r_{N+1}$ sufficiently small. Thus there exist at least two solutions to such $N$-GPTs vanishing structure, one is at condition $\eta_{N+1}=0$ while the other is at $\eta_{N+1}\neq 0$. This is contradiction by using Theorem \ref{th:mainle2}.
		\end{proof}
	\end{theorem}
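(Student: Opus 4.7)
The plan is to fix $\eta_{N+1}\in [-1,1]$ as a parameter and construct the target $\eta'=(\eta_1,\ldots,\eta_N)$ on the cube $D=\{\eta'\in\mathbb{R}^N:\|\eta'\|_\infty \le 1\}$ by Brouwer degree. I would let $F_t(\eta'):=\mathcal{M}_N(\eta',t)$ for $t\in[-1,1]$; by Theorem~\ref{the1} the map $(t,\eta')\mapsto F_t(\eta')$ is continuous, so it gives a homotopy interpolating between $F_{\eta_{N+1}}$ and the distinguished map $F_0$ in which the core conductivity matches the innermost coating. The idea is to compute $\deg(F_0,D,0)$ using a symmetry of $F_0$, then transport this nonvanishing degree to $F_{\eta_{N+1}}$ by homotopy invariance, and finally invoke Theorem~\ref{solvability}.

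The distinguishing feature of $F_0$ is a harmonic-conjugate duality: if $u$ solves the transmission problem with contrasts $(\eta_1,\ldots,\eta_N,0)$ and harmonic data $r^n\cos n\theta$, one builds $v$ in each annulus $\Omega_k$ from the harmonic conjugate $u_k^\perp$ multiplied by $\sigma_k=\prod_{i\le k}(1+\eta_i)/(1-\eta_i)$, with additive constants $c_k$ chosen inductively so that $v$ is continuous across each $\Gamma_k$. The Cauchy--Riemann relations exchange tangential and normal derivatives, which in turn swap the roles of $(1+\eta_k)$ and $(1-\eta_k)$ in the transmission condition. The conclusion is that $v$ solves the transmission problem with contrasts $(-\eta_1,\ldots,-\eta_N,0)$ and data $-r^n\sin n\theta$, whence reading off the far-field coefficient yields $M_n[-\eta',0]=-M_n[\eta',0]$, so $F_0$ is odd. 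By the Borsuk--Ulam theorem (Theorem~\ref{Borsuk-Ulam}), $\deg(F_0,D,0)$ is odd, hence nonzero, provided $0\notin F_0(\partial D)$.

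To verify $0\notin F_t(\partial D)$ for every $t$, I first observe that if $\eta_p^*=\pm 1$ for some $p$, then $\sigma_p=0$ or $\sigma_p=\infty$, so the transmission condition across $\Gamma_p$ degenerates to a Neumann (resp.\ Dirichlet) condition and the exterior system $\{r>r_{p+1}\}$ decouples from the interior. Consequently the CGPTs restricted to $\partial D$ depend only on $(\eta_1,\ldots,\eta_p)$ and $(r_1,\ldots,r_p)$, and in particular $F_t(\eta')=F_0(\eta')$ for $\eta'\in\partial D$, independent of $t$ and of $r_{N+1}$. If $F_0(\eta'^*)=0$ for some $\eta'^*\in\partial D$, this identity would persist when we shrink $r_{N+1}$ and adjust $\eta_{N+1}$ so that Theorem~\ref{th:mainle2} applies; the locally unique solution furnished there lies near the origin and hence inside $D^\circ$, contradicting uniqueness against $\eta'^*\in\partial D$. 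Therefore $0\notin F_t(\partial D)$, the homotopy invariance of Theorem~\ref{th:thmin} yields $\deg(F_{\eta_{N+1}},D,0)=\deg(F_0,D,0)\neq 0$, and Theorem~\ref{solvability} produces the desired solution; nontriviality follows since the only way $\eta'=0$ could appear is if $\mathcal{M}_N(0,\eta_{N+1})=0$, which would again contradict Theorem~\ref{th:mainle2} for small $r_{N+1}$ combined with the decoupling observation.

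The main obstacle will be carrying out Step~1 precisely: producing the conjugate $v$ globally requires that the single-valued conjugate exist in each annulus (which needs the vanishing of the circulation $\int_{\Gamma_k}\partial_\nu u\,ds$, true here because $u-H$ is harmonic at infinity with $H$ a harmonic polynomial) and that the constants $c_k$ can be propagated consistently so that $v|_+=v|_-$ across every interface while the scaled normal derivatives match. A secondary subtlety is Step~2: the reduction argument invoking Theorem~\ref{th:mainle2} only works because the decoupling makes the boundary-value of $F_0$ independent of both $\eta_{N+1}$ and $r_{N+1}$, allowing us to land in the small-core regime even though the given configuration has arbitrary $r_{N+1}$. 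Once these two points are secured, the Brouwer-degree machinery assembles the result directly.
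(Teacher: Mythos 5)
Your proposal follows the paper's proof almost step for step: same cube $D$ and homotopy $F_t$, same harmonic-conjugate construction to establish that $F_0$ is odd, same appeal to Borsuk--Ulam and homotopy invariance, same decoupling observation on $\partial D$ (that when some $\eta_p=\pm 1$ the CGPTs become independent of $\eta_{N+1}$ and $r_{N+1}$), and the same use of Theorem \ref{th:mainle2} to rule out $0\in F_0(\partial D)$ by pitting a hypothetical boundary zero against the small-core uniqueness. The only difference is expository — you flag the single-valuedness of the conjugate and the propagation of the constants $c_k$ as subtleties, which the paper passes over silently — but the argument is the same.
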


		\begin{proof}[Proof of Theorem \ref{fixed core}]
			Without loss of generality we assume $\sigma_0=1$, there holds
			\begin{equation}
				\sigma_{N+1}=\prod_{i=1}^{N+1}\frac{1+\eta_i}{1-\eta_i}=t.
			\end{equation}
			Let $\overline{M}_t^{(n)}=M_n[\eta_1,\dots,\eta_{N+1}]$, where
			\begin{equation}
				\eta_{N+1}=\frac{t\prod_{i=1}^N(1-\eta_i)-\prod_{i=1}^N(1+\eta_i)}{t\prod_{i=1}^N(1-\eta_i)+\prod_{i=1}^N(1+\eta_i)}, \notag
			\end{equation}  and define $G_t(\eta_1,\dots,\eta_N)=(\overline{M}_t^{(1)},\dots,\overline{M}_t^{(N)})$. For $t=0$, we can immediately obtain $\eta_{N+1}=-1$, whose existence is solved in Theorem \ref{main result}. The Brouwer degree of $G_0$ in $D$ follows from \eqref{brouwer degree} that
			\begin{equation}
				\text{deg}(G_0,D,0)\neq 0. \notag
			\end{equation}
			Using the same approach in Theorem \ref{main result}, the homotopy invariance of Brouwer degree holds. It follows that
			\begin{equation}
				\text{deg}(G_t,D,0)=\text{deg}(G_0,D,0)\neq 0.
			\end{equation}
			Then the existence of $N$-GPTs vanishing structure proved.
		\end{proof}

		\begin{proof}[Proof of Theorem \ref{th:main2}]
			Let us first define $D=\{\eta' : |\eta_n|\leq 1, n=1,\dots,N\}$ and $\tilde{F}_{t}(\eta')=\tilde{\mathcal{M}}_N(\eta',t)$. Note that we can not use the same harmonic conjugate arguments to derive that $\mathcal{M}_N(\eta)$ is a odd map. We shall compute the Brouwer degree of $\tilde{F}_0$ directly.
			
			Let
			\begin{equation}
				u(x)=\sum_{n=0}^N H_n(x)+\sum_{k=1}^{N+1}\mathcal{S}_{\Gamma_k}[\phi_k](x),\quad x\in\mathbb{R}^3, \notag
			\end{equation}
			with $H_n=r^nY_n^{n'}(\theta,\phi)$ and $(\phi_1,\dots,\phi_{N+1})$ satisfy \eqref{integral2}. Thus
			\begin{equation}
				(u-\sum_{n=0}^N H_n)(x)=\sum_{n=0}^N\frac{b_nY_n^{n'}(\theta,\phi)}{r^{n+1}} \quad\text{in }\mathbb{R}^3\backslash \Omega, \notag
			\end{equation} and $\tilde{M}_n=(2n+1)b_n$ follows from \eqref{far-field2}. By using (\ref{MD}) and the fact that (see, e.g., \cite{ammari2013spectral,ammari2013mathematical})
\begin{equation} \label{Single sphere}
\mathcal{S}_{\Gamma_k}[Y_n^{n'}](x) = \begin{cases}
 - \frac{1}{2n+1} \frac{r^n}{r_k^{n-1}} Y_n^{n'} (\hat x), \quad & \mbox{if } |x|=r <  r_k, \\
 - \frac{1}{2n+1} \frac{r_k^{n+2}}{r^{n+1}} Y_n^{n'} (\hat x), \quad & \mbox{if } |x|=r > r_k,
\end{cases}
\end{equation}
we immediately get
			\begin{equation}
				\frac{\partial\tilde{M}_n}{\partial \eta_k}\Big|_{\eta=0}=(2n+1)r^{n+1}\mathcal{S}_{\Gamma_k}[\partial_{\nu_k}H_n]=nr_k^{2n+1}.
			\end{equation}
			Note that $\tilde{F}_0=(\tilde{M}_1,\dots,\tilde{M}_N)$, therefore
			\begin{equation}
				\nabla{\tilde{F}_0}(0)=\begin{bmatrix}
					r_1^{3} & r_2^{3}  & \cdots & r_N^{3} \\
					2r_1^{5} & 2r_2^{5}  & \cdots & 2r_N^{5} \\
					\vdots & \vdots  & \ddots & \vdots \\
					Nr_1^{2N+1} & Nr_2^{2N+1} & \cdots & Nr_N^{2N+1}
				\end{bmatrix},\notag
			\end{equation}
			where the Jacobi matrix $\nabla{\tilde{F}_0}(0)$ is of Vandermonde form which is invertible. Similar to Theorem \ref{main result}, we can consider the case of  $\eta_{N+1}=0$ to $N$-coatings over $\Omega_{N+1}$ with radius $r_{N+1}$ sufficiently small. It follows from Theorem \ref{th:mainle2} there exists a unique $|\eta'|\leq \epsilon$ such that $N$-GPTs vanishing. Therefore one can readily derive that $\tilde{F}_0(\eta')\neq 0$ for $\eta'\neq 0$. Then it follows from \eqref{Brouwer degree} that
			\begin{equation}
				\text{deg}(\tilde{F}_0,D,0)=\text{sgn}J_{\tilde{F}_0}(0)=1.
			\end{equation}
Thus the existence of fixed $\eta_{N+1}$ can be proved by homotopic invariance similar to Theorem \ref{main result}. We have shown that there exists a nontrivial combination $\eta'=(\eta_1,\dots,\eta_N)$ with $\eta_n\in [-1,1],\ n=1,\dots,N$ such that $N$-GPTs vanish. Following the similar steps in Theorem \ref{fixed core}, one can show the existence of $N$-GPTs vanishing structure for any fixed $\Gs_{N+1}$. The proof is complete.
		\end{proof}

	\section{Multi-layer structure with proportional radius}
	\label{sec:proportional}
	\quad\ After resolving the global existence problem, it is natural to consider whether the $N$-GPTs vanishing structure is unique under fixed geometry and core conductivity. It is noted that we have only shown the uniqueness locally in section 4. In this section, we investigate a specific class of $N$-GPTs vanishing structures with special radius-settings and derive a uniqueness result for insulating core. Let $r_k=r_{N+1} \gamma^{N+1-k},k=1,\dots,N+1$ with growth parameter $\gamma>1$, where the radius of layers increase by a constant scale.
	\subsection{A uniqueness result of proportional radius}
	\quad \ To start with, we provide an explicit formula for CGPTs in two dimension. We will continue to use the notation of $M_n[\eta]$ in Section \ref{sec:3}. Let  $H_n=r^ne^{in\theta}$ in polar coordinates $(r,\theta)$. For the orthogonality, we can express the solution $u_n$ in the form
	\begin{equation}
		u_n=a_k^{(n)}r^ne^{in\theta}+\frac{b_k^{(n)}}{r^n}e^{in\theta},\quad \text{in }\Omega_k, \quad k=0,1,\dots,N+1,
	\end{equation}
	where $a_{0}^{(n)}=1$ and $b_{N+1}^{(n)}=0$. Then it follows from \eqref{far-field} that
	\begin{equation}\label{k cgpt}
		M_n=-2\pi nb_0^{(n)}.
	\end{equation}

	From transmission boundary conditions, the following recursion formulas hold
	\begin{equation}
		\sigma_{k}a_k^{(n)}r_k^n+\sigma_{k}\frac{b_k^{(n)}}{r_k^n}=\sigma_{k}a_{k-1}^{(n)}r_k^n+\sigma_{k}\frac{b_{k-1}^{(n)}}{r_k^n},\notag
	\end{equation}
	\begin{equation}
		\sigma_{k}a_k^{(n)}r_k^{n}-\sigma_{k}\frac{b_k^{(n)}}{r_k^n}=\sigma_{k-1}a_{k-1}^{(n)}r_k^n-\sigma_{k-1}\frac{b_{k-1}^{(n)}}{r_k^n},\notag
	\end{equation}
	which can immediately deduce
	\begin{equation}\label{recursion}
		\prod_{i=1}^k\begin{bmatrix}
			2\sigma_i &0 \\
			0&2\sigma_i
		\end{bmatrix}\begin{bmatrix} a^{(n)}_{k} \\ b^{(n)}_{k} \\ \end{bmatrix}=\prod_{i=1}^k\begin{bmatrix}
			\sigma_i+\sigma_{i-1} &0 \\
			0&\sigma_i+\sigma_{i-1}
		\end{bmatrix}
		\begin{bmatrix}
			1 &\eta_i r_i^{-2n} \\
			\eta_i r_i^{2n}&1
		\end{bmatrix}
		\begin{bmatrix} 1 \\ b^{(n)}_{0} \\ \end{bmatrix}.
	\end{equation}

	For the convenience of description, we denote by $\mathcal{C}^{m}_{n}$ the set of all combinations of $m$ out $n$, $m\leq n$ and multi-index $\mathbf{i}=(i_1,i_2,\dots,i_m)\in \mathcal{C}^{n}_{m}$, where the indexes are rearranged as $i_1<i_2<\dots<i_n$. Moreover, $\lceil \cdot \rceil$ and $\lfloor  \cdot \rfloor$ represent rounding-up function and rounding-down functions, respectively.
	\begin{lemma}\label{explicit lemma}
		Let $r_1>r_2>\dots>r_{N+1}>0$ and $\sigma_{i-1}+ \sigma_{i}\neq 0$ for all $i\leq N+1$, the CGPTs $M_n$ generated by N+1-layer structure have the following representation
		\begin{equation}\label{explicit M}
			M_n= \frac{2\pi np_{21}^{(n)}}{p_{22}^{(n)}},\quad n\in\mathbb{N}_{+}.
		\end{equation}
		where
		\begin{equation}\label{p21}
			p_{21}^{(n)}=\sum_{j=1}^{\lceil (N+1)/2\rceil}\sum_{\mathbf{i}\in \mathcal{C}_{N+1}^{2j-1}}\prod_{s=1}^{2j-1}\eta_{i_s}r_{i_s}^{(-1)^{s+1}2n},
		\end{equation}
		\begin{equation}\label{p22}
			p_{22}^{(n)}=1+\sum_{j=1}^{\lfloor (N+1)/2\rfloor}\sum_{\mathbf{i}\in \mathcal{C}_{N+1}^{2j}}\prod_{s=1}^{2j}\eta_{i_s}r_{i_s}^{(-1)^{s+1}2n}.
		\end{equation}
	\end{lemma}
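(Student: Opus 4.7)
The plan is to extract $M_n$ from the transfer-matrix recursion \eqref{recursion}. Since the hypothesis $\sigma_{i-1}+\sigma_i\neq 0$ ensures that the scalar prefactor $\prod_{k=1}^{N+1}(\sigma_k+\sigma_{k-1})/(2\sigma_k)$ in \eqref{recursion} is nonzero, it factors out of both sides and cancels in the ratio that ultimately determines $b_0^{(n)}$. Setting
\[
\tilde{T}_k := \begin{bmatrix} 1 & \eta_k r_k^{-2n} \\ \eta_k r_k^{2n} & 1 \end{bmatrix} = I+A_k,\qquad A_k := \begin{bmatrix} 0 & \eta_k r_k^{-2n} \\ \eta_k r_k^{2n} & 0 \end{bmatrix},
\]
the recursion reduces to $(a_{N+1}^{(n)},b_{N+1}^{(n)})^T$ being a nonzero scalar multiple of $\mathbb{T}(a_0^{(n)},b_0^{(n)})^T$, where $\mathbb{T}:=\tilde{T}_{N+1}\tilde{T}_N\cdots\tilde{T}_1$. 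Denoting the $(2,1)$ and $(2,2)$ entries of $\mathbb{T}$ by $p_{21}^{(n)}$ and $p_{22}^{(n)}$, the boundary data $a_0^{(n)}=1$ and $b_{N+1}^{(n)}=0$ give $p_{21}^{(n)}+p_{22}^{(n)}b_0^{(n)}=0$, hence $b_0^{(n)}=-p_{21}^{(n)}/p_{22}^{(n)}$, and then \eqref{k cgpt} yields $M_n = 2\pi n\, p_{21}^{(n)}/p_{22}^{(n)}$, which is the form asserted in \eqref{explicit M}.

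It remains to read off $p_{21}^{(n)}$ and $p_{22}^{(n)}$ explicitly. I would expand
\[
\mathbb{T} = \sum_{S\subseteq\{1,\ldots,N+1\}} A_S,\qquad A_S := A_{j_1}A_{j_2}\cdots A_{j_m},
\]
where $S=\{j_1>j_2>\cdots>j_m\}$ is listed in decreasing order so that the multiplication respects the left-to-right order in $\tilde{T}_{N+1}\cdots\tilde{T}_1$. Since each $A_k$ is purely antidiagonal, a simple parity argument shows $A_S$ is diagonal when $|S|$ is even and antidiagonal when $|S|$ is odd. Consequently only odd-cardinality subsets $|S|=2j-1$ contribute to the $(2,1)$-entry $p_{21}^{(n)}$, and only the empty set together with even-cardinality subsets $|S|=2j$ contribute to the $(2,2)$-entry $p_{22}^{(n)}$; the $I$ coming from $S=\emptyset$ produces the leading $1$ in \eqref{p22}. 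This simultaneously explains the summation ranges $j\le\lceil(N+1)/2\rceil$ and $j\le\lfloor(N+1)/2\rfloor$ in \eqref{p21} and \eqref{p22}.

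A short induction on $m$ then computes the unique nonzero entry in the second row of $A_{j_1}\cdots A_{j_m}$ as $\prod_{s=1}^{m}\eta_{j_s}\,r_{j_s}^{(-1)^{s+1}2n}$: since each $A_k$ is antidiagonal, successive factors alternately contribute the upper-right slot $r_{j_s}^{-2n}$ and the lower-left slot $r_{j_s}^{+2n}$ along the unique nonzero path through the chain. Re-parametrizing the decreasing-order listing $(j_1,\ldots,j_m)$ by the increasing-order multi-index $\mathbf{i}=(i_1<\cdots<i_m)\in\mathcal{C}_{N+1}^m$ via $i_s=j_{m-s+1}$, and summing over all such $\mathbf{i}$, gives the explicit closed forms \eqref{p21} and \eqref{p22}.

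The principal obstacle is purely combinatorial bookkeeping: keeping track of how the alternating exponents of $r_{j_s}$ build up along the product chain, and then verifying that the order-reversal $i_s=j_{m-s+1}$ converts this into the sign pattern $(-1)^{s+1}$ recorded in the statement. The rest of the argument is elementary $2\times 2$ matrix algebra together with the trivial observation that products of antidiagonal $2\times 2$ matrices alternate between diagonal and antidiagonal form.
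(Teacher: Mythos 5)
Your route is the same transfer-matrix expansion the paper uses, and the first three quarters of it are fine: the reduction to $b_0^{(n)}=-p_{21}^{(n)}/p_{22}^{(n)}$, the correct identification of the product order $\mathbb{T}=\tilde T_{N+1}\tilde T_N\cdots\tilde T_1$ forced by the recursion, the parity split into diagonal/antidiagonal contributions, and the inductive formula $\prod_{s=1}^{m}\eta_{j_s}r_{j_s}^{(-1)^{s+1}2n}$ for the nonzero second-row entry of $A_{j_1}\cdots A_{j_m}$ with $j_1>\cdots>j_m$ are all correct.

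The gap is the final re-indexing step, which you assert but do not check, and which in fact fails for even cardinality. Substituting $j_s=i_{m-s+1}$ (so $t=m-s+1$) turns $(-1)^{s+1}$ into $(-1)^{m-t}$, and $(-1)^{m-t}=(-1)^{t+1}$ only when $m$ is odd; when $m$ is even one gets $(-1)^{t}$. So your derivation reproduces \eqref{p21} exactly (the odd-length alternating pattern $+,-,\dots,+$ is a palindrome, so the order reversal is invisible), but for the even-cardinality terms it yields $\prod_{s=1}^{2j}\eta_{i_s}r_{i_s}^{(-1)^{s}2n}$, \emph{not} the $\prod_{s=1}^{2j}\eta_{i_s}r_{i_s}^{(-1)^{s+1}2n}$ displayed in \eqref{p22}. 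This is not a minor slip you can wave away: a direct two-layer computation ($N+1=2$) gives $p_{22}^{(n)}=1+\eta_1\eta_2\,r_2^{2n}r_1^{-2n}$, agreeing with your careful formula and disagreeing with \eqref{p22}, which would give $1+\eta_1\eta_2\,r_1^{2n}r_2^{-2n}$. In other words, had you actually carried out the parity bookkeeping you flagged as the "principal obstacle," you would have found that \eqref{p22} as written is incorrect (the paper's own proof inadvertently multiplies the $\Lambda_{i_s}$ in increasing index order, inconsistent with the recursion). The error is harmless for the rest of the paper, since only $p_{21}^{(n)}$ (which is correct) governs GPT-vanishing and $p_{22}^{(n)}$ is merely a nonzero denominator, but as a proof of the lemma as stated your argument does not close, and you should not have asserted the re-indexing "gives \eqref{p22}" without verifying it.
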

	\begin{proof}
		Let
		\begin{equation}
			P^{(n)}=\begin{bmatrix} p_{11}^{(n)} &p_{12}^{(n)} \\
				p_{21}^{(n)} &p_{22}^{(n)}\end{bmatrix}=
			\prod_{i=1}^{N+1}\begin{bmatrix}
				1 &\eta_i r_i^{-2n} \\
				\eta_i r_i^{2n}&1
			\end{bmatrix}. \notag
		\end{equation}
		From \eqref{recursion}, one immediately gets that
		\begin{equation}
			\prod_{i=1}^{N+1}\begin{bmatrix}
				2\sigma_i &0 \\
				0&2\sigma_i
			\end{bmatrix}\begin{bmatrix} a^{(n)}_{N+1} \\ b^{(n)}_{N+1} \\ \end{bmatrix}=\prod_{i=1}^{N+1}\begin{bmatrix}
				\sigma_i+\sigma_{i-1} &0 \\
				0&\sigma_i+\sigma_{i-1}
			\end{bmatrix}
			\begin{bmatrix} p_{11}^{(n)} &p_{12}^{(n)} \\
				p_{21}^{(n)} &p_{22}^{(n)}\end{bmatrix}
			\begin{bmatrix} 1 \\ b^{(n)}_{0} \\ \end{bmatrix}. \notag
		\end{equation}
		Note that $b_{N+1}^{(n)}=0$, which implies $p_{21}^{(n)}+b_0^{(n)}p_{22}^{(n)}=0$, we conclude that \eqref{explicit M} holds with the help of \eqref{k cgpt}.
		
		We now deduce the explicit representations of $p_{21}^{(n)}$ and $p_{22}^{(n)}$. For simplicity in notation, we let
		\begin{equation}
			\Lambda_i^{(n)}=\begin{bmatrix}
				0 &\eta_i r_i^{-2n} \\
				\eta_i r_i^{2n}&0
			\end{bmatrix},\notag
		\end{equation}
		thus the matrix $P^{(n)}$ turns to
		\begin{equation}
			P^{(n)}=\prod_{i=1}^{N+1}(I+\Lambda_i^{(n)})=\sum_{j=0}^{N+1}\sum_{\mathbf{i}\in \mathcal{C}_{N+1}^{j}}\prod_{s=1}^j \Lambda^{(n)}_{i_s}. \notag
		\end{equation}
		It is easy to see that $\Lambda_i^{(n)}$ is anti-diagonal, we have
		\begin{equation}
			\Lambda^{(n)}_{i_p}\Lambda^{(n)}_{i_q}=\begin{bmatrix}
				\eta_p r_p^{-2n}\eta_q r_q^{2n} & 0 \\
				0&\eta_p r_p^{2n}\eta_q r_q^{-2n}
			\end{bmatrix}. \notag
		\end{equation}
		By simple induction, it follows that for any even $j$,
		\begin{equation}
			\prod_{s=1}^{j}\Lambda^{(n)}_{i_s}=\begin{bmatrix}
				\eta_{i_1} r_{i_1}^{-2n}\eta_{i_2} r_{i_2}^{2n}\dots\eta_{i_{j-1}} r_{i_{j-1}}^{2n}\eta_{i_{j}} r_{i_{j}}^{2n} & 0 \\
				0&\eta_{i_1} r_{i_1}^{2n}\eta_{i_2} r_{i_2}^{-2n}\dots\eta_{i_{j-1}} r_{i_{j-1}}^{-2n}\eta_{i_{j}} r_{i_{j}}^{-2n}
			\end{bmatrix},\notag
		\end{equation}
		and for any odd $j$ there holds
		\begin{equation}
			\prod_{s=1}^{j}\Lambda^{(n)}_{i_s}=\begin{bmatrix}
				0 & \eta_{i_1} r_{i_1}^{-2n}\eta_{i_2} r_{i_2}^{2n}\dots\eta_{i_{j-1}} r_{i_{j-1}}^{2n}\eta_{i_{j}} r_{i_{j}}^{-2n} \\
				\eta_{i_1} r_{i_1}^{2n}\eta_{i_2} r_{i_2}^{-2n}\dots\eta_{i_{j-1}} r_{i_{j-1}}^{-2n}\eta_{i_{j}} r_{i_{j}}^{2n}&0
			\end{bmatrix}.\notag
		\end{equation}
To summarize, we can deduce \eqref{p21} and \eqref{p22}.
	\end{proof}
	\begin{remark}\label{remark1}
		We remark that $p_{21}^{(n)}$ is a polynomial of $\eta$ and the coefficients are determined by the radius of the structure. Specifically, the coefficient of $\eta_{i_1}\cdots\eta_{i_{2j-1}}$ is given by
		\begin{equation}
			r_{i_1}^{2n}r_{i_2}^{-2n}\cdots r_{i_{2j-2}}^{-2n}r_{i_{2j-1}}^{2n}.\notag
		\end{equation} For rearrangement, it's easy to see that
		\begin{equation}
			r_{i_{2j-1}}^{2n}<r_{i_1}^{2n}r_{i_2}^{-2n}\cdots r_{i_{2j-2}}^{-2n}r_{i_{2j-1}}^{2n}<r_{i_1}^{2n}.\notag
		\end{equation}  Under the proportional radius settings, this property will play a crucial role in our subsequent analysis.
	\end{remark}

	\begin{lemma}\label{zero sol}
		Let $\eta=(\eta',\eta_{N+1})$ and $\mathcal{M}(\eta)=(M_1,M_2,\dots,M_N)$, if $r_k=r \gamma^{N+1-k},k=1,\dots,N+1$, then the nonlinear algebra system $\mathcal{M}(\eta',0)=0$ if and only if $\eta'=0$.
		\begin{proof}
			The sufficiency is evident, we shall now prove the necessity. Note that for $\eta_{N+1}=0$, the structure can be considered by $N$ layers.
			
			In order to achieve $M_n=0$, it's sufficient to consider the value of $p_{21}^{(n)}$. Let $\psi_n=p_{21}^{(n)}$, it follows from Lemma \ref{explicit lemma} that
			\begin{equation}\label{phik}
				\psi_{n} = \sum_{j=1}^{\lceil N/2\rceil}\sum_{\text{i}\in C_{N}^{2j-1}}r^{2n}\gamma^{2m_{{\mathbf{i}}}n}\prod_{s=1}^{2j-1}\eta_{i_s},
			\end{equation}
			where
			\begin{equation}\label{mi1}
				m_\mathbf{i}=N+1-i_{1}+i_{2}-\dots+i_{2j-2}-i_{2j-1}.
			\end{equation}
			From rearrangement, i.e. $1\leq i_{1}<i_{2}<\dots<i_{2j-2}<i_{2j-1}\leq N$, it follows that
			\begin{equation}
				1\leq N+1-i_{2j-1}\leq m_{\mathbf{i}}\leq N+1-i_{1}\leq N. \notag
			\end{equation}
			Define
			\begin{equation}\label{Aj1}
				A^N_k:=\{\mathbf{i}\in \mathcal{C}_{N}^{2j-1}|m_\mathbf{i}=k,\ 2j-1\leq N\},\  k=1,2,\dots,N,
			\end{equation} by \eqref{phik} we have
			\begin{equation}
				\psi^{(n)} = \sum_{k=1}^{N}r^{2n}\gamma^{2kn}\sum_{\mathbf{i}\in A^N_k}\prod_{s=1}^{2j-1}\eta_{i_s}.
			\end{equation}
			
			Let $\xi_{k}:=\sum_{\mathbf{i}\in A_{k}^N}\prod_{s=1}^{2j-1}\eta_{i_s}$ and $\xi=(\xi_1,\xi_2,\dots,\xi_{N})$, the nonlinear system $\psi^{(n)} = 0,$ $n=1,2,\dots,N$ turns to a linear algebra system with
			\begin{equation}\label{linear system}
				\begin{bmatrix}
					r^2\gamma^2 & r^2\gamma^2 & r^2\gamma^4 & \cdots & r^2\gamma^{2N}\\
					r^4\gamma^4 & r^4\gamma^4 & r^4\gamma^8 & \cdots & r^4\gamma^{4N}\\
					r^6\gamma^6 & r_N^6\gamma^6 & r^6\gamma^{12} & \cdots & r^6\gamma^{6N}\\
					\vdots & \vdots & \vdots & \ddots & \vdots\\
					r^{2N}\gamma^{2N} & r^{2N}\gamma^{2N} & r^{2N}\gamma^{4N} & \cdots & r^{2N}\gamma^{2NN}
				\end{bmatrix}\begin{bmatrix}
					\xi_1\\\xi_2\\\xi_3\\ \vdots \\\xi_{N}
				\end{bmatrix}=\begin{bmatrix}
					0\\0\\0\\ \vdots \\0
				\end{bmatrix},
			\end{equation}
			which coefficient matrix $M$ is Vandermonde form.
			
			Since the coefficient matrix is non-degenerate, we claim that $\xi=0$ if and only if $\psi^{(n)} = 0$. Note $\xi_1=\eta_N$ and $\xi_N=\eta_1$ so that $\eta_1=\eta_N=0$. More generally, assuming that $\eta_1=\eta_2=\dots=\eta_i=0$, it follows from Remark \ref{remark1} that $\xi_{N-i}=\eta_{i+1}=0$. By induction we can immediately deduce
			\begin{equation}
				\eta_1=\eta_2=\dots=\eta_N=0.
			\end{equation}
			which implies that the nonlinear system $\mathcal{M}(\eta',0)=0$ have only trivial solution.
		\end{proof}
	\end{lemma}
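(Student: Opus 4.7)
\medskip
\noindent The plan is to combine the explicit formula from Lemma~\ref{explicit lemma} with the algebraic structure induced by the proportional radii in order to reduce the nonlinear system $\mathcal{M}(\eta',0)=0$ to a linear Vandermonde system whose unique solution can then be inverted, one component of $\eta'$ at a time.

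First I would invoke Lemma~\ref{explicit lemma} to write $M_n=2\pi n\, p_{21}^{(n)}/p_{22}^{(n)}$, where $p_{22}^{(n)}\neq 0$ by the solvability statement of Theorem~\ref{invertible}; hence $M_n=0$ is equivalent to $p_{21}^{(n)}=0$. Setting $\eta_{N+1}=0$ kills every summand in \eqref{p21} that contains the index $N+1$, so effectively $\mathbf{i}$ runs over $\mathcal{C}_N^{2j-1}$. Plugging $r_k=r\gamma^{N+1-k}$ into the remaining product yields
\[
\prod_{s=1}^{2j-1} r_{i_s}^{(-1)^{s+1}2n} = r^{2n}\,\gamma^{2n\, m_{\mathbf{i}}},\qquad m_{\mathbf{i}}:=N+1-i_1+i_2-i_3+\cdots-i_{2j-1},
\]
and a telescoping estimate shows $1\leq m_{\mathbf{i}}\leq N$. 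Grouping the monomials of $p_{21}^{(n)}$ by the value of $m_{\mathbf{i}}$ gives
\[
p_{21}^{(n)} = r^{2n}\sum_{k=1}^{N} \gamma^{2nk}\,\xi_k,\qquad \xi_k:=\sum_{\mathbf{i}:\, m_{\mathbf{i}}=k}\prod_{s=1}^{2j-1}\eta_{i_s},
\]
and the $N$ equations $p_{21}^{(n)}=0$ for $n=1,\dots,N$ form a linear system in $(\xi_1,\dots,\xi_N)$ whose coefficient matrix, after dividing each row by $r^{2n}$, is Vandermonde in the distinct positive nodes $\gamma^{2},\gamma^{4},\dots,\gamma^{2N}$. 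Invertibility immediately forces $\xi_1=\cdots=\xi_N=0$.

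The crux is then to show that the map $\eta'\mapsto(\xi_1,\dots,\xi_N)$ is injective. I would establish by induction on $i\geq 0$ the claim: if $\eta_1=\cdots=\eta_i=0$, then $\xi_{N-i}=\eta_{i+1}$. For the base case, $m_{\mathbf{i}}=1$ rewrites as $i_1+(i_3-i_2)+\cdots+(i_{2j-1}-i_{2j-2})=N$ with each parenthesized difference $\geq 1$ and $i_1\geq 1$, so $j=1$ and $i_1=N$, giving $\xi_1=\eta_N$. For the inductive step the same telescoping identity, now applied under the restriction $i_s\geq i+1$, forces any contributing multi-index to satisfy $j=1$ and $i_1=i+1$, so $\xi_{N-i}=\eta_{i+1}$. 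Peeling off variables one at a time yields $\eta'=0$. The main obstacle I anticipate is precisely this combinatorial bookkeeping at the inductive step, namely verifying that no multi-index with $2j-1\geq 3$ slips through once the lower-index entries of $\eta'$ are zero; but the alternating identity $i_1-i_2+\cdots+i_{2j-1}=i_1+\sum_{l=1}^{j-1}(i_{2l+1}-i_{2l})$ combined with the strict monotonicity of the indices keeps the argument clean, and the converse implication ($\eta'=0\Rightarrow \mathcal{M}(\eta',0)=0$) is immediate from \eqref{p21}.
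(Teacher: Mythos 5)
Your proposal follows the same route as the paper: reduce $M_n=0$ to $p_{21}^{(n)}=0$ via Lemma~\ref{explicit lemma}, exploit the proportional radii to collapse the combinatorial sum into a Vandermonde system in $\xi_1,\dots,\xi_N$, then peel off $\eta_1,\dots,\eta_N$ one at a time by the telescoping index bound. One small bookkeeping slip: your stated inductive claim is $\xi_{N-i}=\eta_{i+1}$ (so the $i=0$ base case is $\xi_N=\eta_1$, coming from $m_{\mathbf{i}}=N$ forcing $j=1$, $i_1=1$), but the identity you actually derive as the base case is $\xi_1=\eta_N$ (from $m_{\mathbf{i}}=1$); both are true unconditionally and the argument is unaffected, but the presentation should start the induction with $m_{\mathbf{i}}=N-i$, $i=0,1,\dots,N-1$, and it is the upper bound $m_{\mathbf{i}}\le N+1-i_1-(j-1)$ coming from $i_1\ge i+1$, rather than the lower bounds you first cite, that forces $j=1$.
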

	
We want to mention that the results of Lemma \ref{zero sol} have been proven for general settings of radius. However, the proof is useful for our subsequent uniqueness result.	
	\begin{theorem}\label{pro radii}
		For the radius given by $r_k=r \gamma^{N+1-k}$, $ k=1,\dots,{N+1}$,
		$\gamma>1$ and fixed core conductivity $\sigma_{N+1}\geq 0$, there exists a combination $(\sigma_1,\dots,\sigma_{N})$ such that N-GPTs vanish.  Furthermore, if $\sigma_{N+1}=0$, the combination is unique.
	\end{theorem}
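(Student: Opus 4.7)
The argument splits into existence and uniqueness. For existence, Theorem \ref{fixed core} already produces, for any admissible radii and any fixed $\sigma_{N+1}\ge 0$, a combination achieving $N$-GPT vanishing, so specialising to $r_k=r\gamma^{N+1-k}$ gives the existence part directly. The substantive content is uniqueness when $\sigma_{N+1}=0$, equivalently $\eta_{N+1}=-1$. My plan is to recast the $N$-GPT vanishing system as the identification of a single monic polynomial in an auxiliary variable $\rho$, and then invert the resulting polynomial-to-$\eta$ map by peeling off one layer at a time.

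Writing the mode-$n$ solution as $u|_{\Omega_k}=a_k^{(n)}r^n+b_k^{(n)}r^{-n}$ and setting $c_k^{(n)}:=b_k^{(n)}/a_k^{(n)}$, the Neumann condition at $\Gamma_{N+1}$ (forced by $\sigma_{N+1}=0$) gives $c_N^{(n)}=r_{N+1}^{2n}$, and the transmission condition at $\Gamma_k$ translates into the M\"obius relation $c_{k-1}^{(n)}r_k^{-2n}=T_{\eta_k}(c_k^{(n)}r_k^{-2n})$ with $T_\eta(x)=(x-\eta)/(1-\eta x)$. The proportional setting makes the intra-layer scaling ratio $(r_{k+1}/r_k)^{2n}=\gamma^{-2n}=:\rho$ independent of $k$, so the outward iteration collapses to
\[
c_0^{(n)}\,r_1^{-2n}\;=\;T_{\eta_1}\!\bigl(\rho\,T_{\eta_2}(\rho\,T_{\eta_3}(\cdots T_{\eta_N}(\rho)\cdots))\bigr).
\]
Setting $P_0=Q_0=1$ and iteratively $P_{k+1}=\rho P_k-\eta_{N-k}Q_k$, $Q_{k+1}=Q_k-\eta_{N-k}\rho P_k$, a straightforward induction yields both the reciprocal identity $Q_k(\rho)=\rho^kP_k(1/\rho)$ and the fact that $P_N$ is monic of degree $N$, with the nested M\"obius expression equal to $P_N(\rho)/Q_N(\rho)$. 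The vanishing condition $M_n=0$ for $n=1,\ldots,N$ thus amounts to $P_N(\gamma^{-2n})=0$ at the $N$ distinct values $\rho_n=\gamma^{-2n}$, forcing
\[
P_N(\rho)=\prod_{n=1}^{N}(\rho-\gamma^{-2n}),
\]
which is uniquely prescribed by the geometry alone.

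To finish I would show the map $(\eta_1,\ldots,\eta_N)\mapsto P_N$ is injective on the physical range $(-1,1)^N$ by peeling off one variable at a time. Evaluating at $\rho=0$ gives $P_N(0)=-\eta_1\,Q_{N-1}(0)=-\eta_1$, which determines $\eta_1$. Writing $P_{N-1}=\sum_{j=0}^{N-1}a_j\rho^j$ with $a_{N-1}=1$, matching the remaining coefficients in $P_N=\rho P_{N-1}-\eta_1\rho^{N-1}P_{N-1}(1/\rho)$ couples $a_j$ with $a_{N-2-j}$ through $2\times 2$ linear systems of determinant $1-\eta_1^2$ (with a single equation of coefficient $1-\eta_1$ for the self-paired middle index when $N$ is even). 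Since $\sigma_k>0$ for $k\le N$ enforces $|\eta_k|<1$, these inversions are well-defined and $P_{N-1}$ is uniquely recovered; iterating the same step extracts $\eta_2=-P_{N-1}(0)$, then $P_{N-2}$, and so on down to $\eta_N$. The main obstacle is this backward inversion: once the polynomial $P_N$ is identified, one must confirm that the recursive coefficient-matching cleanly reconstructs $(\eta_1,\ldots,\eta_N)$, and this is precisely where the strict bound $|\eta_k|<1$ (coming from finite positive coating conductivities) guarantees non-degeneracy of the $2\times 2$ pair systems (and of the middle equation) at every stage.
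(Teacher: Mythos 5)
Your proposal is mathematically sound, and it takes a genuinely different route from the paper's proof. Your existence step is identical (both appeal to Theorem \ref{fixed core} / its corollary). For uniqueness, the paper starts from the explicit formula $M_n = 2\pi n\,p_{21}^{(n)}/p_{22}^{(n)}$ of Lemma \ref{explicit lemma}, groups the monomials in $p_{21}^{(n)}$ by the exponent class $m_{\mathbf{i}}$ into the combinatorial sums $\xi_k$, solves the Vandermonde system \eqref{linear system2} to pin down $(\xi_2,\dots,\xi_{N+1})$, then argues that this fixes $p_{21}^{(n)}$ \emph{and} $p_{22}^{(n)}$ for \emph{every} $n$ (via the identity $\zeta_k=\xi_{k+1}$), hence the entire exterior field and the DtN map, after which it invokes the Astala--P\"aiv\"arinta global uniqueness theorem for the two-dimensional Calder\'on problem to recover the conductivity. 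Your argument instead encodes each layer transmission as the conjugacy $f\circ T_{\eta_k}=s_k f$ of the M\"obius involution $f(x)=(1-x)/(1+x)$, exploits the proportional radii to make the per-layer rescaling a single parameter $\rho=\gamma^{-2n}$, identifies the far-field coefficient with $P_N(\rho)/Q_N(\rho)$ for a monic degree-$N$ polynomial $P_N$ depending on $(\eta_1,\dots,\eta_N)$, forces $P_N(\rho)=\prod_{n=1}^N(\rho-\gamma^{-2n})$, and then inverts $\eta\mapsto P_N$ by peeling: $\eta_1=-P_N(0)$ (using $Q_{N-1}(0)=1$, which follows from the reciprocity $Q_k(\rho)=\rho^kP_k(1/\rho)$ and monicity), followed by $2\times2$ systems of determinant $1-\eta_1^2$ for the remaining coefficients of $P_{N-1}$, and induction. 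This is more elementary than the paper's route: it entirely avoids the deep Calder\'on uniqueness theorem and yields a constructive algorithm, though it is tied to the insulated-core normalisation $c_N^{(n)}=r_{N+1}^{2n}$ and to the proportional radii that collapse the rescaling to a single $\rho$. Two points you left somewhat implicit but which do go through: (i) $M_n=0 \Rightarrow P_N(\gamma^{-2n})=0$ requires $Q_N(\gamma^{-2n})\neq 0$, which holds because $|\eta_k|<1$ for $k\le N$ keeps the $2\times2$ transfer matrices non-degenerate, so $P_N$ and $Q_N$ are coprime; and (ii) the non-vanishing of the $2\times2$ determinants at every peeling stage only needs $\eta_k\neq\pm1$, which is exactly what $\sigma_k\in(0,\infty)$ for $k\le N$ guarantees, so the backward inversion is well-defined and uniqueness follows once existence is granted.
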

	\begin{proof}
		The existence can be deduced immediately by Corollary \ref{fixed core}. Then we show the uniqueness of solution with $\sigma_{N+1}=0$.
		
		Let
		\begin{equation}
			\xi_{k+1}:=\sum_{\text{i}\in A_{k}^{N+1}}\prod_{s=1}^{2j-1}\eta_{i_s},
		\end{equation}
		where $A_{k}^{N+1}$ is defined by \eqref{mi1} and \eqref{Aj1}. Recalling $\sigma_{N+1}=0$ equals to $\eta_{N+1}=-1$, and thus $\xi_1=\eta_{N+1}=-1$. Using the symmetry of \eqref{p21} and \eqref{p22} we get
		\begin{equation}
			M_n(\eta_1,\eta_2,\dots,\eta_{N+1})=-M_n(-\eta_1,-\eta_2,\dots,-\eta_{N+1}),
		\end{equation}
		it's sufficient to consider the case of  $\eta_{N+1}=1$. Using the same approach in Lemma \ref{zero sol}, it can be shown that
		\begin{equation}\label{linear system2}
			\begin{bmatrix}
				r^2\gamma^2 & r^2\gamma^4 & r^2\gamma^6 &\cdots & r^2\gamma^{2N}\\
				r^4\gamma^4 & r^4\gamma^8 & r^4\gamma^{12} &\cdots & r^4\gamma^{4N}\\
				r^6\gamma^6 & r^6\gamma^{12} & r^6\gamma^{18} &\cdots & r^6\gamma^{6N}\\
				\vdots & \vdots & \vdots & \ddots & \vdots\\
				r^{2N}\gamma^{2N} & r^{2N}\gamma^{4N} & r^{2N}\gamma^{6N} &\cdots & r^{2N}\gamma^{2N^2}
			\end{bmatrix}\begin{bmatrix}
				\xi_2\\\xi_3\\\xi_4\\ \vdots \\\xi_{N+1}
			\end{bmatrix}=-\begin{bmatrix}
				r^2\\r^4\\r^6\\ \vdots \\r^{2N}
			\end{bmatrix}.
		\end{equation}
		Therefore $(\xi_2,\xi_3,\xi_4,\dots,\xi_{N+1})$ is unique determined by the non-trivial linear system \eqref{linear system2}, which implies that $p_{21}^{(n)}$ can be unique determined for any $n\in \mathbb{N}_+$
		\begin{equation}
			p_{21}^{(n)}=\begin{bmatrix}
				r^{2n}&r^{2n}\gamma^{2n}&\cdots& r^{2n}\gamma^{2nN}\end{bmatrix}\begin{bmatrix}
				\xi_1\\\xi_2\\\ \vdots \\\xi_{N+1}
			\end{bmatrix}.
		\end{equation}
		Furthermore, we shall point out $p_{22}^{(n)}$ can also be determined by $(\xi_1,\xi_2,\dots,\xi_{N+1})$. Let $\mathsf{m}_{\mathbf{i}}=-i_1+i_2-\dots-i_{2j-1}+i_{2j}$, where $\mathbf{i}\in C^{2j}_{N+1}$, then we have
		\begin{equation}
			p_{22}^{(n)}=\begin{bmatrix}
				1&\gamma^{2n}&\cdots& \gamma^{2nN}\end{bmatrix}\begin{bmatrix}
				1\\\zeta_1\\\ \vdots \\\zeta_{N}
			\end{bmatrix},
		\end{equation}
		where $\zeta_{k}=\sum_{\text{i}\in \mathcal{A}_k^{N+1}}\prod_{s=1}^{2j}\eta_{i_s}$, $\mathcal{A}_k^{N+1}=
		\{\mathbf{i}\in C_{N}^{2j}|\mathsf{m}_\mathbf{i}=k,2j\leq N+1 \}$.
		
		In terms of combination, we can separate $\xi_{k+1}=\xi_{k+1}^{(1)}+\xi_{k+1}^{(2)}$ with
		\begin{equation}
			\xi_{k+1}^{(1)}=\Big(\sum_{\mathbf{i}\in \mathcal{A}_k^N}\prod_{s=1}^{2j}\eta_{i_s}\Big)\eta_{N+1},
		\end{equation}
		\begin{equation}
			\xi_{k+1}^{(2)}=\sum_{\mathbf{i}\in A_{k}^N}\prod_{s=1}^{2j-1}\eta_{i_s}.
		\end{equation}
	
		Since $\mathbf{i}=(i_1,i_2,\dots,i_{2j-1})\in A_{k}^{N}$ equals to
		\begin{equation}
			N+1-i_{1}+i_{2}+\dots+i_{2j-2}-i_{2j-1}=k, \notag
		\end{equation}
		thus $(i_1,i_2,\dots,i_{2j-1},{N+1})\in \mathcal{A}_k^{N+1}$. Combining with the definition of $\zeta_k$ it follows
		\begin{equation}
			\zeta_k=\frac{\xi_{k+1}^{(1)}}{\eta_{N+1}}+\xi_{k+1}^{(2)}\eta_{N+1},
		\end{equation}
		therefore $\zeta_k=\xi_{k+1}^{(1)}+\xi_{k+1}^{(2)}=\xi_{k+1}$ is unique. The total field $u_n-H_n=b_0^{(n)}e^{in\theta}/r^n=-p_{21}^{(n)}e^{in\theta}/{p_{22}^{(n)}r^n}$ is determined uniquely. Thus the uniqueness of conductivity distribution can be obtained by uniqueness of $L^{\infty}(\Omega)$ under Dirichlet-to-Neumann (DtN) map \cite{astala2006calderon}.
	\end{proof}
	\subsection{The extreme case}
	\label{subsec:extreme}
	\quad \ In this part, we shall consider the distance between two adjacent layers is extremely small. In other words, the core of multi-layer structure is enclosed by extremely thin layers. In mathematically, we set $\gamma=1+\delta$, where $\delta\ll1$. One then has
	\begin{equation}
		r_k=r_{N+1} (1+\delta)^{N-k},\quad k=1,\dots,{N+1}.
	\end{equation}
	Our work focus on the conductivity distribution of $N$-GPTs vanishing structure which has insulative core. Take $\lambda_{N+1}=-1$, similar to \eqref{linear system2} we have
	\begin{equation}\label{extremeeq}
		\begin{bmatrix}
			(1+\delta)^2 & (1+\delta)^4 & (1+\delta)^6 &\cdots & (1+\delta)^{2N}\\
			(1+\delta)^4 & (1+\delta)^8 & (1+\delta)^{12} &\cdots & (1+\delta)^{4N}\\
			(1+\delta)^6 & (1+\delta)^{12} & (1+\delta)^{18} &\cdots & (1+\delta)^{6N}\\
			\vdots & \vdots & \vdots & \ddots & \vdots\\
			(1+\delta)^{2N} & (1+\delta)^{4N} & (1+\delta)^{6N} &\cdots & (1+\delta)^{2N^2}
		\end{bmatrix}\begin{bmatrix}
			\xi_2\\\xi_3\\\xi_4\\ \vdots \\\xi_{N+1}
		\end{bmatrix}=\begin{bmatrix}
			1\\1\\1\\ \vdots \\1
		\end{bmatrix},
	\end{equation}
	whose existence and uniqueness of conductivity distribution has been established by Theorem \ref{pro radii}. The exact value of conductivity is characterized by the following Theorem
	\begin{theorem}
		Assume the unique solution $\eta$ of \eqref{extremeeq} is continuous differentiable respect to $\delta$. That is, the solution of $\mathcal{M}_{\delta}(\eta)=0$ can be written by
		\begin{equation}
			\eta_k = \eta_k^{(0)}+\eta_k^{(1)}\delta +\mathcal{O}(\delta^2), \quad k=1,\dots,N
		\end{equation}
		then
		\begin{equation}\label{eta1}
			\eta_1=(-1)^{N-1}(1-N(N-1)\delta)+\mathcal{O}(\delta^2),
		\end{equation}
		\begin{equation}
			\eta_k=(-1)^{N-k}+\mathcal{O}(\delta^2)\quad k=2,\dots,N.
		\end{equation}
		\begin{proof}
			With the help of Vandermonde determinant, straight forward computation shows
			\begin{equation}
				\begin{aligned}
					\xi_{N+1}&=(-1)^{N-1}\frac{\prod_{i\neq N}((1+\delta)^{2i}-1)}{\prod_{i\neq N}((1+\delta)^{2N}-(1+\delta)^{2i})}\\
					&=(-1)^{N-1}\frac{\prod_{i\neq N}(2i\delta+i(2i-1)\delta^2)}{\prod_{i\neq N}((2N-2i)\delta+(N(2N-1)-i(2i-1))\delta^2)}\\
					&=(-1)^{N-1}+(-1)^{N-1}\Big(\sum_{i=1}^{N-1}\big(\frac{2i-1}{2}\big)-\frac{2(N+i)-1}{2}\Big)\delta\\
					&=\xi_{N+1}^{(0)}+\xi_{N+1}^{(1)}\delta,
				\end{aligned}\notag
		\end{equation}
		where $\xi_{N+1}^{(0)}=(-1)^{N-1}$ and $\xi_{N+1}^{(1)}=(-1)^NN(N-1)$. Recalling the definition of $\xi_{k}$, we can immediately get \eqref{eta1}.
		Similar calculations yield that
		\begin{equation}
			\begin{aligned}
				\xi_{N}&=(-1)^{N-1}\frac{\prod_{i\neq N-1}((1+\delta)^{2i}-1)}{\prod_{i\neq N-1}((1+\delta)^{2N-2}-(1+\delta)^{2i})}\\
			&=\xi_{N}^{(0)}+\xi_{N}^{(1)}\delta,
			\end{aligned}\notag
		\end{equation}
		where $\xi_{N}^{(0)}=(-1)^{N-2}N$ and $\xi_{N}^{(1)}=(-1)^{N-1}N(N-1)^2$. We now rewrite $\xi_N$ as
		\begin{equation}
			\xi_N = \eta_2+\sum_{i=2}^N\eta_1\eta_i\eta_{i+1},
		\end{equation}
	    note that $|\eta_k|\leq 1,k=1,\dots,N$, we have $\eta_2^{(0)}=(-1)^{N-2}$ and $\eta_1^{(0)}\eta_i^{(0)}\eta_{i+1}^{(0)}=(-1)^{N-2}$. By induction one can easily obtain $\eta_k^{(0)}=(-1)^{N-k}$, thus we get $(-1)^{N-k}\eta_k^{(1)}\leq 0$. Furthermore the first order term can be written by
	    \begin{equation}
	    	\begin{aligned}
	    		\xi_{N}^{(1)}&=-(N-1)\eta_{1}^{(1)}+\sum_{i=2}^N(-1)^i2 \eta_{i}^{(1)}\\&=(-1)^{N-1}N(N-1)^2+\sum_{i=2}^N(-1)^i2 \eta_{i}^{(1)},
	    	\end{aligned}\notag
	    \end{equation}
    	therefore $\eta_k^{(1)}=0, k=2,\dots,N$.
		\end{proof}
	\end{theorem}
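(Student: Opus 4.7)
The plan is to Taylor-expand the system \eqref{extremeeq} in $\delta$ and identify coefficients order by order. I would first solve \eqref{extremeeq} exactly by Cramer's rule; since the coefficient matrix is of Vandermonde type in the variables $(1+\delta)^{2k}$, each $\xi_k$ can be written as a ratio of Vandermonde determinants. For the two components I actually need, this yields
\[
\xi_{N+1} = (-1)^{N-1}\,\frac{\prod_{i\neq N}\bigl((1+\delta)^{2i}-1\bigr)}{\prod_{i\neq N}\bigl((1+\delta)^{2N}-(1+\delta)^{2i}\bigr)},
\]
and a routine Taylor expansion produces $\xi_{N+1}^{(0)} = (-1)^{N-1}$, $\xi_{N+1}^{(1)} = (-1)^N N(N-1)$, and analogously $\xi_N^{(0)} = (-1)^{N-2}N$, $\xi_N^{(1)} = (-1)^{N-1}N(N-1)^2$.

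Next I would convert these back to the original variables using the combinatorial formula $\xi_{k+1} = \sum_{\mathbf{i}\in A_k^{N+1}} \prod_s \eta_{i_s}$. A short enumeration of the constraint defining $A_k^{N+1}$ shows that $A_N^{N+1}=\{(1)\}$, so $\xi_{N+1}=\eta_1$, and the asymptotic expansion \eqref{eta1} follows immediately. Similarly one checks that $A_{N-1}^{N+1}$ consists of the singleton $(2)$ and the triples $(1,i,i+1)$ for $i=2,\dots,N$, which gives the exact identity
\[
\xi_N = \eta_2 + \sum_{i=2}^N \eta_1 \eta_i \eta_{i+1}.
\]
Matching at order $\delta^0$ and substituting the known $\eta_1^{(0)}=(-1)^{N-1}$, the right-hand side becomes a sum of $N$ real numbers each of absolute value at most $1$ whose total equals $(-1)^N N$. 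This forces each summand to have absolute value $1$ with sign $(-1)^N$. Reading off the recursion $\eta_i^{(0)}\eta_{i+1}^{(0)} = -1$ together with $\eta_2^{(0)}=(-1)^{N-2}$ yields $\eta_k^{(0)} = (-1)^{N-k}$ for all $k=2,\dots,N$.

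For the first-order corrections with $k\geq 2$, I would differentiate the exact identity for $\xi_N$ at $\delta=0$ and insert the leading values $\eta_k^{(0)}=(-1)^{N-k}$ to obtain
\[
\xi_N^{(1)} = -(N-1)\eta_1^{(1)} + 2\sum_{i=2}^N (-1)^i\eta_i^{(1)}.
\]
Since $\eta_1^{(1)}=(-1)^N N(N-1)$ has already been computed and $-(N-1)\eta_1^{(1)} = \xi_N^{(1)}$ identically, one gets $\sum_{i=2}^N (-1)^i\eta_i^{(1)} = 0$. The constraint $|\eta_k|\leq 1$ together with $\eta_k^{(0)}=\pm 1$ forces $(-1)^{N-k}\eta_k^{(1)}\leq 0$ for each $k\geq 2$, so the summands $(-1)^i\eta_i^{(1)}$ all share a common sign; their vanishing sum then forces each to be zero.

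The main obstacle I anticipate is the combinatorial enumeration of the sets $A_k^{N+1}$: justifying rigorously that the expression for $\xi_N$ is an exact identity and not merely a leading-order approximation requires a careful case analysis of the signed-sum constraint $N+1-i_1+i_2-\cdots-i_{2j-1}=k$ across all $j$. The extremum argument that pins down $\eta_k^{(0)}$ also hinges on the fortunate coincidence that the crude bound $|\xi_N^{(0)}|\leq N$ is saturated by the Vandermonde computation, which is what allows the sign pattern to be uniquely determined from only the single equation provided by $\xi_N^{(0)}$.
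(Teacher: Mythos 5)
Your proposal is correct and follows essentially the same route as the paper: Vandermonde/Cramer expansion of $\xi_{N+1}$ and $\xi_N$, the exact identity $\xi_N=\eta_2+\sum_{i=2}^N\eta_1\eta_i\eta_{i+1}$, the saturation-of-$|\eta_k|\le 1$ argument at order $\delta^0$, and the sign-constraint forcing $\eta_k^{(1)}=0$ at order $\delta^1$. Your explicit enumeration of $A_N^{N+1}$ and $A_{N-1}^{N+1}$ just fills in a step the paper takes for granted.
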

	\begin{remark}
		We note that the extreme vanishing structures of two-layers can be also considered as the high conductivity (HC) imperfect interface which studied in \cite{kang2019construction}. Indeed, it has been shown that if
		\begin{equation}\label{imperfect}
			\eta_1=1-\frac{2\sigma_2\sigma_0}{\sigma_2-\sigma_0}\delta+\mathcal{O}(\delta^2),
		\end{equation}
		then $\Omega$ is a 1-GPTs vanishing structure. For an insulative core with $\sigma_{2}=0$, the above condition \eqref{imperfect} simplifies to
		\begin{equation}
			\eta_1=1+\mathcal{O}(\delta^2). \notag
		\end{equation}
		which is consistent with \eqref{eta1}.
	\end{remark}
	\section{Numerical illustration}
	\label{sec:5}
	\quad \ \ In this section, we present some numerical examples to corroborate our theoretical results in the previous sections. We shall focus on two dimensional case, while for three dimensional case only entries of matrix should be changed accordingly. In order to find conductivity combination of $N$-GPTs vanishing structure, it suffices to solve the nonlinear equations by iteration methods.
	\begin{equation}\label{equations}
		\mathcal{M}_{N}(\eta)=0.
	\end{equation}

	For computation simplicity, we here use the formula generated by \cite{FD23,kong2024inverse}, that is
	\begin{equation}
		M_n[\eta]=-2\pi ne^T\Upsilon_N^{(n)}(P_N^{(n)}[\eta])^{-1}e,
	\end{equation}
	where $e=(1,1,\cdots,1)^T$ and $P_{N+1}^{(n)}[\eta]$, $\Upsilon_{N+1}^{(n)}$ are defined by \eqref{PN}, \eqref{RN}.
	We note that $P_{N+1}^{(n)}[\eta]$ is invertible for $\|\eta\|_{l^{\infty}}\leq 1$ due to the well-posedness of elliptic equation. Hence, simple calculation shows
	\begin{equation}
		\frac{\partial{M_n}}{\partial\eta_i}=-\frac{2\pi n}{\eta^2_i}e^T \Upsilon_{N+1}^{(n)}(P_{N+1}^{(n)}[\eta])^{-1}E_{ii}(P_{N+1}^{(n)}[\eta])^{-1}e,
	\end{equation}
	where $E_{ii}$ is sparse matrix with value 1 at the $i$-th row $i$-th column and $0$ others. As an illustration, we put $\eta_{N+1}=-1$, i.e., the core is insulated. To solve \eqref{equations} with the condition $\|\eta\|_{l^{\infty}}\leq 1$, we shall use the projection Newton iteration, the well known Newton direction, is described by
	\begin{equation}
		p_{N}=-(\nabla \mathcal{M}(\eta))^\dagger\mathcal{M}^T(\eta),
	\end{equation}
	here $(\mathcal{M}(\eta))^\dagger$ is the pseudo inverse of $\mathcal{M}(\eta)$. Note the projection Newton iteration is not always a descent iteration, we use the projection gradient
	iteration to ensure the convergence, and the gradient direction given by
	\begin{equation}
		p_{G}=-(\nabla \mathcal{M}(\eta))^T\mathcal{M}^T(\eta).
	\end{equation}
	In each iteration, we first identify a descent point along the Newton direction and then project it into the set
	$\|\eta\|_{l^{\infty}}\leq 1$. Next, we check whether the projected point is a descent point. If it is not, we repeat the process by replacing the gradient direction to ensure that the iteration remains target descent. Due to the local second-order convergence of Newton method, our iterative approach will get an accurate solution quickly.
	
	First, we consider that intervals between each two adjacent layers are equidistance. Set
	\begin{equation}\label{radii1}
		r_i=2-(i-1)/N,\quad i=1,2,\dots,N+1,
	\end{equation}
	such that $r_1=2$ and $r_{N+1}=1$. In Figure \ref{fig:GPTva1}, we show the conductivity distribution with the GPTs vanishing structure designed by \eqref{radii1}, where $N=8$ and $\Gs_{N+1}=0$. It can be seen the conductivity of adjacent layers behave an oscillatory pattern, mathematically, it's saying that $\sigma_{j}< \sigma_{j-1},\sigma_{j+1}$ for odd $j$ and $\sigma_{j}> \sigma_{j-1},\sigma_{j+1}$ for even $j$. As the layer approaches to core or background, the oscillation phenomenon will become more severe or slight. Moreover, the conductivity distribution also exhibits monotonicity in the gap layers, it shows the conductivity increase from slow to fast as approaching to core in even layers and decrease in odd layers.
	\begin{figure}
		\includegraphics[width=1\textwidth]{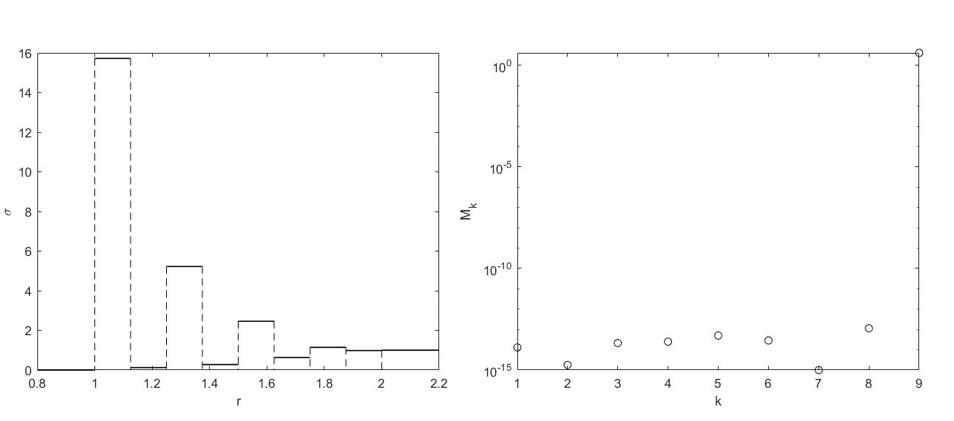}
		\captionsetup{font={small}}
		\caption{\label{fig:GPTva1} Left figure shows the conductivity distribution of GPTs vanishing structure with \eqref{radii1}, and right figure shows the values of CGPTs generated by such distribution. $N=8$ and $\Gs_{N+1}=0$.}
	\end{figure}

	Next, we consider the radius of layers are increasing with the same scale $\gamma$ which is studied in section \ref{sec:proportional}, that is
	\begin{equation}\label{radii2}
		r_{i-1} = \gamma r_{i},\quad i=1,2,\dots,N+1,
	\end{equation}
	for the convenience of comparison, we also set $r_1=2$ and $r_{N+1}=1$. Proportional radius settings makes the inner layers thinner, relatively, the outer layer will be thicker. Figure \ref{fig:GPTva2} presents the conductivity distribution with the $N$-GPTs vanishing structure designed by \eqref{radii2}, where $N=8$ and $\Gs_{N+1}=0$. Similarly, the oscillation phenomenon and monotonicity are also preserved in this configuration. Moreover, one can also see that the conductivity oscillation is more pronounced in thinner layers and slighter in thicker layers.
	\begin{figure}
		\includegraphics[width=1\textwidth]{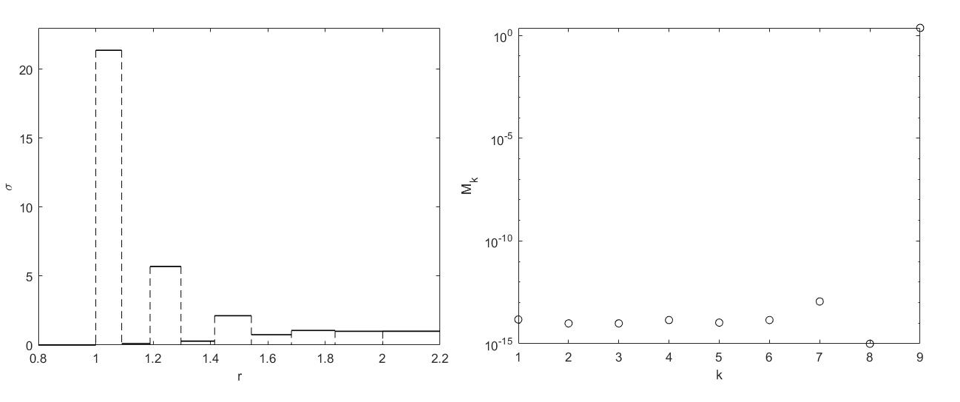}
		\captionsetup{font={small}}
		\caption{\label{fig:GPTva2}Left figure shows the conductivity distribution of GPTs vanishing structure with \eqref{radii2}, and right figure shows the values of CGPTs generated by such distribution.$N=8$, $\Gs_{N+1}=0$.}
	\end{figure}

	As the end, we verify our the conclusion of extreme case obtained in subsection \ref{subsec:extreme}. Take $\delta = 0.01$ and therefore $\gamma = 1.01$, Table \ref{table:example 1} exhibits the conductivity value $\sigma_k$ and contrast parameter $\eta_k$ with $N$-GPTs vanishing structure in extreme case, where $N=8$. The numerical result in the table is corresponding to our asymptotic conclusion. Moreover, the coats designed to achieve cloaking are quite thin, therefore the conductivity will be sufficient large. Indeed, this is also physically justifiable.
	
	\begin{table}[ht]
		\renewcommand{\arraystretch}{1}
		\caption{The GPTs vanishing structure designed in the extreme case.}
		\label{table:example 1}
		\centering
		\tiny
		\resizebox{1\textwidth}{!}{
			\begin{tabular}{cccccc}
				\toprule
				\multicolumn{6}{c}{$N=8$} \\
				\midrule
				$\eta_k$ & \textasciitilde &  -0.4485    &  0.9215  & -0.9844  & 0.9944 \\
				$M_k$& \textasciitilde  &  -1.5632e-13    &  2.5935e-13  & 0  & 1.4388e-13 \\
				$\sigma_k$&1  &  0.3436    &   8.4117  &  0.0661  & 23.3907\\
				\midrule
				$\eta_k$&-0.9975 &   0.9989  & -0.9996  & 0.9999 &  -1 \\
				$M_k$& 1.2790e-13 &   2.5757e-13  &  7.4607e-14  & 1.0000e-15  & 1.6874e-4 \\
				$\sigma_k$&0.0288 &   52.6041 &   0.0112   & 188.4053 &  0\\
				\bottomrule
			\end{tabular}
		}
	\end{table}
	It's worth to mention that the CGPTs are highly sensitive in extreme case, this is because the $p_{22}^{(n)}$ defined by \eqref{p22} will be sufficiently small. This singular phenomenon will greatly affect the stability of the numerical iteration. To overcome, we divide the interval $\eta_{N+1}\in [-1,0]$ into $d$ equal parts, that is, $0=\eta_{N+1}^1>\eta_{N+1}^2> \dots>\eta_{N+1}^d=-1$. We solve the accurate solution with fixed $\eta_{N+1}^j$ and take it as the initial value for $\eta_{N+1}^{j+1}$. The preprocessing technique can be a practical method for solving high-order GPTs vanishing setting. 
	
	\section*{Acknowledgements}
	The work of Y. Deng was supported by NSFC-RGC Joint Research Grant No. 12161160314.
	\section*{Data Availability Statement}
	No data was used for the research described in the article.

\section*{Conflict of interest statement}
The authors declared that they have no conflicts of interest to this work.
We declare that we do not have any commercial or associative interest that represents a conflict of interest in connection with the work submitted.
	\small
	

\begin{thebibliography}{00}
		\bibitem{ammari2013spectral} {H. Ammari, G. Ciraolo, H. Kang, H. Lee, and G. W. Milton}, Spectral theory of a
		Neumann–Poincaré-type operator and analysis of cloaking due to anomalous localized resonance, Arch. Rat. Mech. Anal., 208 (2013), 667-692.
		\bibitem{ammari2014reconstruction2} {H. Ammari, Y. Deng, H. Kang, and H. Lee}, Reconstruction of inhomogeneous conductivities
		via the concept of generalized polarization tensors, A. I. H. Poincar\'e-AN., 31
		 (2014), 877–897.
		\bibitem{ammari2013mathematical} {H. Ammari, J. Garnier, W. Jing, H. Kang, M. Lim, K. Sølna, and H. Wang}, Mathematical and statistical methods for multistatic imaging, 2098, Springer, 2013.
		\bibitem{ammari2007polarization} {H. Ammari and H. Kang}, Polarization and moment tensors: with applications to inverse
		problems and effective medium theory, 162, Springer Science, 2007.
		\bibitem{ammari2005reconstruction} {H. Ammari, H. Kang, E. Kim, and M. Lim}, Reconstruction of closely spaced small inclusions,
		SIAM J. Numer. Anal., 42 (2005), 2408–2428.
		\bibitem{ammari2013enhancement1} {H. Ammari, H. Kang, H. Lee, and M. Lim}, Enhancement of near cloaking using generalized
		polarization tensors vanishing structures. Part I: The conductivity problem, Commun. Math.
		Phys., 317 (2013), 253–266.
		\bibitem{ammari2013enhancement2} {H. Ammari, H. Kang, H. Lee, and M. Lim}, Enhancement of near-cloaking. Part II: The
		helmholtz equation, Commun. Math. Phys., 317 (2013), 485–502.
		\bibitem{astala} {K. Astala, M. Lassas, and L. Päivärinta}, Calderon’s inverse problem for anisotropic
		conductivity in the plane, Comm. Part. Diff. Equa., 30 (2004), 207–224.
		\bibitem{astala2006calderon} {K. Astala and L. Päivärinta}, Calderón’s inverse conductivity problem in the plane, Ann.
		Math., (2006), 265–299.
		\bibitem{bao2014nearly} {G. Bao, H. Liu, and J. Zou}, Nearly cloaking the full maxwell equations: cloaking active
		contents with general conducting layers, J. Math. Pures. Appl., 101 (2014), 716–733.
		\bibitem{benveniste1999neutral} {Y. Benveniste and T. Miloh}, Neutral inhomogeneities in conduction phenomena, J. Mech.
			Phys. Solids, 47 (1999), 1873–1892.
		\bibitem{blaasten2020recovering} {E. Blåsten and H. Liu}, Recovering piecewise constant refractive indices by a single far-field
		pattern, Inverse Problems, 36 (2020), 085005.
		\bibitem{bruhl2003direct} {M. Brühl, M. Hanke, and M. S. Vogelius}, A direct impedance tomography algorithm for
		locating small inhomogeneities, Numer. Math., 93 (2003), 635–654.
		\bibitem{choi2023geometric} {D. Choi, J. Kim, and M. Lim}, Geometric multipole expansion and its application to semineutral inclusions of general shape, Z. Angew. Math. Phys., 74 (2023), 39.
		\bibitem{choi2024construction} {D. Choi and M. Lim}, Construction of inclusions with vanishing generalized polarization tensors
		by imperfect interfaces, Stud. Appl. Math., 152 (2024), 673–695.
		\bibitem{ciarlet2013linear} {P. G. Ciarlet}, Linear and nonlinear functional analysis with applications, SIAM, (2013).
\bibitem{DLU171}
Y. Deng, H. Liu and G. Uhlmann, On regularized full- and partial-cloaks in acoustic scattering, Commun. Part. Differential Eq., 42 (6) (2017), 821-851.
\bibitem{DLU172}
Y. Deng, H. Liu and G. Uhlmann, Full and partial cloaking in electromagnetic scattering,  Arch. Ration. Mech. Anal., 223 (1)(2017), 265-299.

		\bibitem{deng2024identifying} {Y. Deng, H. Liu, and Y. Wang}, Identifying Active Anomalies in a Multilayered Medium by
		Passive Measurement in EIT, SIAM J. Appl. Math., 84 (2024), 1362–1384.
\bibitem{FD23}
 X. Fang and Y. Deng, On plasmon modes in multi-layer structures, Math. Meth. Appl. Sci. 46 (17) (2023), 18075-18095.
		\bibitem{feng2017construction} {T. Feng, H. Kang, and H. Lee}, Construction of gpt-vanishing structures using shape derivative, J. Comput. Phys., (2017), 569–585.
		\bibitem{greenleaf} {A. Greenleaf, M. Lassas, and G. Uhlmann}, On nonuniqueness for calderon’s inverse
		problem, Math. Res. Lett., 10 (2003), 685–693.
		\bibitem{jarczyk2012neutral} {P. Jarczyk and V. Mityushev}, Neutral coated inclusions of finite conductivity, Proc. Math.
		Phys. Eng. Sci., 468 (2012), 954–970.
		\bibitem{ji2021neutral} {Y.-G. Ji, H. Kang, X. Li, and S. Sakaguchi}, Neutral inclusions, weakly neutral inclusions,
		and an over-determined problem for confocal ellipsoids, in Geometric Properties for Parabolic
		and Elliptic PDE’s, Springer, 2021, 151–181.
		\bibitem{kang2019construction} {H. Kang and X. Li}, Construction of weakly neutral inclusions of general shape by imperfect
		interfaces, SIAM J. Appl. Math., 79 (2019), 396–414.
		\bibitem{kang2021polarization} {H. Kang, X. Li, and S. Sakaguchi}, Polarization tensor vanishing structure of general shape:
		Existence for small perturbations of balls, Asymptot. Anal., 125 (2021), 101–132.
		\bibitem{kang2022existence} {H. Kang, X. Li, and S. Sakaguchi}, Existence of weakly neutral coated inclusions of general
		shape in two dimensions, Appl. Anal., 101 (2022), 1330–1353.
		\bibitem{kohn2010cloaking} {R. V. Kohn, D. Onofrei, M. S. Vogelius, and M. I. Weinstein}, Cloaking via change of
		variables for the helmholtz equation, Commun. Pure. Appl. Math., 63 (2010), 973–1016.
		\bibitem{kohn2008cloaking} {R. V. Kohn, H. Shen, M. S. Vogelius, and M. I. Weinstein}, Cloaking via change of
		variables in electric impedance tomography, Inverse Problems, 24 (2008), 015016.
		\bibitem{kong2024inverse} {L. Kong, Y. Deng, and L. Zhu}, Inverse conductivity problem with one measurement: uniqueness of multi-layer structures, Inverse Problems, 40 (2024), 085005.
		\bibitem{kong2024enlargement} {L. Kong, L. Zhu, Y. Deng, and X. Fang}, Enlargement of the localized resonant band gap
		by using multi-layer structures, J. Comput. Phys., 518 (2024), 113308.
		\bibitem{leonhardt2009broadband} {U. Leonhardt and T. Tyc}, Broadband invisibility by non-euclidean cloaking, Science, 323
		(2009), 110–112.
		\bibitem{li2016literature} {J. Li}, A literature survey of mathematical study of metamaterials, Int. J. Numer. Anal. Model,
		13 (2016), 230–243.
\bibitem{LLRU15}
Li, J., Liu, H., Rondi, L., Uhlmann, G., Regularized transformation-optics cloaking
for the Helmholtz equation: from partial cloak to full cloak. Commun. Math. Phys. 335,
671–712 (2015)
		\bibitem{lim2020inclusions} {M. Lim and G. W. Milton}, Inclusions of general shapes having constant field inside the core
		and nonelliptical neutral coated inclusions with anisotropic conductivity, SIAM J. Appl. Math.,
		80 (2020), 1420–1440.
		\bibitem{nachman1996global} {A. I. Nachman}, Global uniqueness for a two-dimensional inverse boundary value problem,
		Ann. Math., (1996), 71–96.
		\bibitem{nguyen2016cloaking} {H.-M. Nguyen}, Cloaking using complementary media in the quasistatic regime, ANN. I. H.
		POINCARE-AN., 33 (2016), 1509–1518.
		\bibitem{pham2018solutions} {D. C. Pham}, Solutions for the conductivity of multi-coated spheres and spherically symmetric
		inclusion problems, Z. Angew. Math. Phys., 69 (2018), 1–14.
		\bibitem{schiffer1949virtual} {M. Schiffer and G. Szegö}, Virtual mass and polarization, Trans. Am. Math. Soc., 67 (1949),
		 130–205.
		\bibitem{wang2013neutral} {X. Wang and P. Schiavone}, A neutral multi-coated sphere under non-uniform electric field
		in conductivity, Z. Angew. Math. Phys., 64 (2013), 895–903.
		\bibitem{you2020combined} {L. You, J. Man, K. Yan, D. Wang, and H. Li}, Combined fourier-wavelet transforms for
		studying dynamic response of anisotropic multi-layered flexible pavement with linear-gradual
		interlayers, Appl. Math. Model, 81 (2020), 559–581.
\bibitem{Ze86}
E. Zeidler, Nonlinear functional analysis and its applications: Fixed point theorems, Springer-
Verlag, New York, 1986.
	\end{thebibliography}
\end{document}